\patchcmd{\@maketitle}{\huge}{\Large}{}{}
\renewcommand{\autoref}[1]{\cref{#1}}
\newtheorem{theorem}{Theorem} 
\newtheorem{lemma}[theorem]{Lemma}
\newtheorem{corollary}[theorem]{Corollary}
\newtheorem{proposition}[theorem]{Proposition}
\theoremstyle{definition}
\newtheorem{definition}[theorem]{Definition}
\theoremstyle{remark}
\definecolor{RoyalBlue}{RGB}{0,32,96}
\definecolor{ForestGreen}{RGB}{34,139,34}
\definecolor{BrickRed}{RGB}{203, 65, 84}
\newcommand{\closure}{\mathrm{cl}}
\newcommand{\opening}{\mathrm{op}}
\newcommand{\cl}{\closure}
\newcommand{\op}{\opening}
\newcommand{\hop}{\widetilde{\op}}
\newcommand{\Arm}{{\mathrm{Arm}}}
\newcommand{\ArmProb}{P_{\varrho}}
\newcommand{\Pol}{{\mathrm{Pol}}}
\newcommand{\PolLong}{\Pol_d(n;r)}
\newcommand{\PolQuot}{\widehat{\Pol}}
\newcommand{\PolProb}{P_{\varrho,r}}
\newcommand{\PolQuotProb}{\hat{P}_{\varrho,r}}
\newcommand{\PolSympRedProb}{\hat{P}_{\sqrt{r},r}}
\newcommand{\PolQuotVol}{\widehat{\vol}_{\varrho,r}}
\newcommand{\QuotMap}{\pi}
\newcommand{\diag}{\mathrm{diag}}
\newcommand{\vect}{\mathrm{vec}}
\newcommand{\SO}{\mathrm{SO}}
\newcommand{\Shift}{\sigma}
\newcommand{\hShift}{\widetilde{\Shift}}
\DeclareMathAlphabet{\mathcal}{OMS}{cmsy}{m}{n}
\newcommand{\Sphere}{\mathbb{S}}
\newcommand{\Disk}{\mathbb{B}}
\DeclareDocumentCommand{\Hess}{ O{} }{\operatorname{Hess}_{#1}}
\newcommand{\qand}{\quad \text{and} \quad}
\newcommand{\transp}{^{\mathsf{T\!}}}
\newcommand{\adj}{^{*\!}}
\newcommand*{\dd}{\mathop{}\!\mathrm{d}}
\newcommand{\ceq}{\coloneqq}
\newcommand{\R}{{\mathbb{R}}}
\newcommand{\abs}[1]{\mathopen{}\left\lvert#1\right\rvert\mathclose{}} 
\newcommand{\nabs}[1]{\lvert{#1}\rvert} 
\newcommand{\innerprod}[1]{\mathopen{}\left\langle#1\right\rangle\mathclose{}}
\newcommand{\ninnerprod}[1]{\langle{#1}\rangle}
\DeclarePairedDelimiter\pars{\lparen}{\rparen}
\newcommand{\intervalcc}[1]{\mathopen{}\left[#1\right]\mathclose{}}
\DeclareMathOperator{\tr}{tr}
\newcommand{\vol}{{\mathrm{vol}}}
\newcommand{\dvol}{\mathrm{d}\vol}
\newcommand{\push}{\#}
\begin{document}
\title{CoBarS: Fast reweighted sampling \\ for polygon spaces in any dimension}
\author[1]{Jason Cantarella}
\author[1,2]{Henrik Schumacher}

\affil[1]{Mathematics Department, University of Georgia, Athens, GA, USA}
\affil[2]{Faculty of Mathematics, Chemnitz University of Technology, Chemnitz, Germany}

\maketitle

\begin{abstract}
\begin{small}
We present the first algorithm for sampling random configurations of closed $n$-gons with any fixed edgelengths 
$r_1, \dots, r_n$ in any dimension $d$ which is proved to sample correctly from standard probability measures on these spaces. We generate open $n$-gons as weighted sets of edge vectors on the unit sphere and close them by taking a M\"obius transformation of the sphere which moves the center of mass of the edges to the origin. Using previous results of the authors, such a M\"obius transformation can be found in $O(n)$ time. The resulting closed polygons are distributed according to a pushforward measure. The main contribution of the present paper is the explicit calculation of reweighting factors which transform this pushforward measure to any one of a family of standard measures on closed polygon space, including the symplectic volume for polygons in $\R^3$. For fixed dimension, these reweighting factors may be computed in $O(n)$ time. Experimental results show that our algorithm is efficient and accurate in practice, and an open-source reference implementation is provided.
\\

\noindent
\textbf{MSC-2020 classification:} 
60D05, 
65D18, 
82D60 
\end{small}
\end{abstract}


\section{Introduction}\label{sec:intro}

We consider configurations of $n$ points in $\R^d$ with positions $v_1, \dotsc, v_n$ separated by a length vector $r$ of fixed distances $r_1, \dotsc, r_n >0$. We treat indices cyclically, so we may write the displacement vectors $v_{i+1} - v_i = r_i \, y_i$ where $y_i$ is a unit vector in $\R^d$, letting $r_n \, y_n = v_n - v_1$. The elements of such a space are polygons with edgelengths given by the $r_i$. Our goal in this paper is to give an efficient way to randomly sample such polygons. 

This sampling problem is of interest in the statistical physics of polymers, where the configuration space of polygons with fixed edgelengths is the state space for the~\emph{freely-jointed chain model}~\cite{Kramers1946} of a ring polymer. (See the survey paper~\cite{Orlandini2007} for many applications of these kinds of models in physics and biology.) The same space is studied in robotics~\cite{Farber:2008gg}, where it models the kinematic configuration space of a robot arm with spherical revolute joints forming a closed loop.\footnote{If this seems an unusual special case, the standard example~\cite{MilgramTrinkle2004} of a kinematic loop is that of a robot in a fixed position manipulating an object which is also constrained, such as a door handle. Here there are implicit constraints connecting the base of the robot to the door's hinge and the hinge to the door handle. In more complicated situations, multiple kinematic loops may be present at the same time, but this is beyond the scope of the present paper.} It is also of intrinsic interest in differential geometry and topology~\cite{MillsonKapovich1996, FarberFromm2013, Mandini2014, Gallet2017}.

Various algorithms have been proposed to construct random polygons~\cite{CSS,CDSU,CantarellaShonkwilerAAP, Millett1994,Vologodskii:1979ik,Klenin:1988dt,AlvaradoCalvoMillett2011,Moore2005,Moore2004,Varela2009}. However, all of them suffer from one or more deficiencies; they are either explicitly restricted to dimension $d=2$ or $d=3$, not proved to sample the correct measure, and/or only generate equilateral polygons. Therefore, there is a need for a polygon sampling method which is fast, can be proved to sample the correct measure, and gracefully accommodates arbitrary choices of dimension and edgelengths. In this paper, we present such a method: conformal barycenter sampling (CoBarS).

We start by observing that it is easy to construct configurations of $n+1$ points $v_1, \dots, v_{n+1}$ so that $\nabs{v_{i+1} - v_{i}} = r_i$ by sampling unit vectors $x_i$ uniformly from a product of spheres and letting $v_i = \sum_{j<i} r_i \, x_i$. 
These configurations have the correct edgelengths, but they usually fail to close because $v_{n+1} = v_1$ is satisfied if and only if $\sum_{i=1}^n r_i \, x_i = 0$. 
However, we will build a map (Definition~\ref{defn:shift and conformal barycenter closure}) from the space of open polygons to the space of closed polygons using the conformal barycenter (Definition~\ref{defn:director and conformal barycenter}). 
This map is only defined implicitly, but can be computed efficiently using the algorithm in~\cite{CantarellaSchumacher2022}. This gives us a fast sampling algorithm, but the resulting samples are biased. The main theoretical contribution of this paper is a fast and explicit way to compute reweighting factors which eliminate this sampling bias. We then see in experiments that we can compute integrals over the configuration space of polygons quickly and accurately using this reweighting. The resulting method is faster and more general than the Action-Angle method described in~\cite{CSS,CDSU}.

We note that the idea of generating closed polygons from open ones or (more or less equivalently) polygons with given edgelengths from arbitrary closed polygons is definitely not a new one and a variety of polygon closure or resampling algorithms have been proposed~\cite{CantarellaChapmanReiterShonkwiler2018}. Any of these could be used to generate (biased) samples from closed polygon space, which could in principle be reweighted to sample the standard measure. The key new feature of this approach is that our closure algorithm is mathematically controlled enough that we can prove that it (almost) always converges, provide time bounds, and explicitly compute reweighting factors.

\section*{Acknowledgments}

The authors are grateful to many colleagues for helpful discussions of polygons and hyperbolic geometry, especially Clayton Shonkwiler, Tetsuo Deguchi, and Erica Uehara. 


\section{Opening and Closing Polygons}
\label{sec:opening and closing polygons}

\begin{figure}
\begin{center}
	\capstart 
\newcommand{\inc}[2]{\begin{tikzpicture}
    \node[inner sep=0pt] (fig) at (0,0) {\includegraphics{#1}};
	\node[above right= 0ex] at (fig.south west) {\begin{footnotesize}(#2)\end{footnotesize}};    
\end{tikzpicture}}%
\presetkeys{Gin}{
	trim = 0 100 15 20, 
	clip = true,  
	width = 0.333\textwidth
}{}
	\hfill
	\inc{Geodesics2}{a}%
	\hfill
	\inc{Geodesics1}{b}%
	\hfill	
	\inc{Shift_Geometric}{c}%
	\hfill{}	
	\caption{%
	(a)	
	Geodesics joining some point $w$ in $\Disk$ to three points $x_1, x_2, x_3$ in~$\Sphere$ and the corresponding conformal directors.
	The directors $V_{x_i}(w)$ do not sum up to $0$.
	(b)
	Same as (a), but here the sum of directors $V_{x_i}(w)$ vanishes;  thus $w$ is the conformal barycenter of the $x_i$.
	(c) Geometric construction of the directors: 
	Each geodesic emanating from $x$ intersects the secant $p x$ in the same angle. Thus
	all directors $V_x(w)$ for $w$ on the secant $p x$  point in the same direction.
	}
\label{fig:conformalbary}
\end{center}
\end{figure}

In order to sample, we need to carefully define a probability space and a corresponding measure.  
Set $\Sphere \ceq S^{d-1}$, $\Disk \ceq B^{d-1} = \set{ x \in \R^d \mid \nabs{x} < 1}$, and $\bar\Disk \ceq \set{ x \in \R^d \mid \nabs{x} \leq 1 }$ and assume that~$r \in \R^n_+$. Then we can define spaces of open polygonal ``arms'' and closed polygons by letting 
\[
\Arm_d(n) \ceq \Set{ x \in (\Sphere)^n \vphantom{\textstyle \sum_{i = 1}^n}} \qand \PolLong \ceq \Set{ y \in \Arm_d(n) | \textstyle \sum_{i = 1}^n r_i \, y_i = 0}
\]
and by associating directions $x_i$ or $y_i$ with vertices $v_i$ by adding and shifting the center of mass of the $v_i$ to the origin. We make the standing assumption that $d \geq 2$ and $n \geq 3$. Since polygons cannot close if some edgelength $r_j$ is greater than or equal to $\frac{1}{2} \sum_{i=1}^n r_i$, we will also make the standing assumption that
\begin{align}
\label{eq:stable_r}
\text{for each $j \in \set{1, \dotsc, n}$:} \quad  r_j < \frac{1}{2} \sum_{i=1}^n r_i.
\end{align}
We will assume that $d$, $n$ and $r$ are fixed, and replace $\Arm_d(n)$ with $\Arm$ and $\PolLong$ with $\Pol$ for brevity.

We define a Riemannian metric $g_\varrho$ by scaling the standard metric $g_\Sphere$ on each $\Sphere$ by $\varrho_i^2$ and assuming that the tangent spaces to different spheres are orthogonal: 
\[
	g_\varrho(u,v) \ceq \sum_{i = 1}^{n} \varrho_{i}^{2} \, g_{\Sphere}(u_{i},v_{i})
	\quad 
	\text{for all $u,v \in T_x \Arm = T_{x_1} \Sphere \times \dotsm \times T_{x_n} \Sphere$.}
\] 
This is the restriction of the metric $\ninnerprod{u,v}_\varrho = \sum_{i=1}^n \varrho_{i}^2 \ninnerprod{u_i,v_i}$ on $T (\R^d)^n$.

The metric $g_\varrho$ generates a corresponding volume measure $\vol_\varrho$ and probability measure $\ArmProb$ on $\Arm$, which happen to be products of measures on $\Sphere$: 
\[
	\quad 
	\vol_\varrho = \prod_{i=1}^n \varrho_i^{d-1} \vol_\Sphere,
	\quad 
	\ArmProb \ceq \frac{\vol_\varrho}{\vol_\varrho(\Arm)} 
	= 
	\prod_{i=1}^n \frac{\vol_\Sphere}{\vol_\Sphere(\Sphere)}.
\]
We note that $\ArmProb$ is independent of $\varrho$, even though $\vol_\varrho$ and $g_\varrho$ depend on our choice of $\varrho_i$. When $\varrho_i = \sqrt{r_i}$,
the volume corresponds to the symplectic volume of Millson and Kapovich~\cite{MillsonKapovich1996}. When $\varrho_i = r_i$, the volume is equivalent to taking a product of spheres with radii $r_i$. 
 
It is known that $\Pol$ is a Riemannian submanifold of the Riemannian manifold $\Arm$, with isolated singularities only at points where all $y_i$ are colinear~\cite[Prop. 3.1]{FarberFromm2013}. So $\Pol$ inherits a submanifold metric $g_{\varrho,r}$, (Hausdorff) volume measure $\vol_{\varrho,r}$, and probability measure $\PolProb$. We will assume for the rest of the paper that we have fixed $\varrho$ and $r$, and therefore fixed $\PolProb$, which we will refer to as~\emph{the} measure on $\Pol$.  

We will now connect polygon spaces to hyperbolic geometry (as in~\cite{MillsonKapovich1996}). We think of $\Disk$ as the Poincar\'e disk model of hyperbolic space, where $\Sphere$ is the sphere at infinity. 

\begin{definition} Given $x_i \in \Sphere$, at every $w \in \Disk$ there is a unique geodesic ray joining $w$ to $x_i$. The unit tangent vector to this geodesic ray is called the \emph{director} $V_{x_i}(w)$. If $x \in \Arm$, $r \in \R^n_+$ and $w_*$ has the property that $\sum r_i \, V_{x_i}(w_*) = 0$, we say $w_*$ is a~\emph{conformal barycenter} of $x$ with weights $r$. (See \autoref{fig:conformalbary} for a $2$-dimensional illustration.)
\label{defn:director and conformal barycenter} 
\end{definition}

\begin{figure}
\begin{center}
	\capstart 
\newcommand{\inc}[2]{\begin{tikzpicture}
    \node[inner sep=0pt] (fig) at (0,0) {\includegraphics{#1}};
	\node[above right= 0ex] at (fig.south west) {\begin{footnotesize}(#2)\end{footnotesize}};    
\end{tikzpicture}}%
\presetkeys{Gin}{
	trim = 0 0 0 0, 
	clip = true,  
	width = 0.333\textwidth
}{}
	\hfill
	\inc{unstable-weights_rendered}{a}%
	\hfill
	\inc{unstable-directions_rendered}{b}%
	\hfill	
	\inc{stable-everything_rendered}{c}%
	\hfill{}	
	\caption{%
	(a)	
	This polygon has edgelengths $r = (8, 16, 3, 4)$, which do not satisfy~\eqref{eq:stable_r}. 
	Regardless of their directions, the short edges cannot close the polygon, so we do not consider such $r$.
	(b)
	This polygon has edgelengths $r = (8, 12, 3, 4)$, which do satisfy~\eqref{eq:stable_r}. For this polygon $x_2 = x_4$ and their edgelength sum is greater than the remaining two edges. Thus $x$ is not stable with respect to $r$ in the sense of~\autoref{defn:stable weights and directions}. It will turn out to be the case that we cannot close this polygon. 
	(c) This polygon has edgelengths $r = (8, 12, 3, 4)$ and unique directions ($x_i \neq x_j$ when $i \neq j$). This $x$ is stable and we will be able to close this polygon.
	}
\label{fig:stable}
\end{center}
\end{figure}
Even under the assumption~\eqref{eq:stable_r}, there will be some polygons we are unable to close with our method (see \autoref{fig:stable}). So we now impose an additional (mild) hypothesis.
\begin{definition}
If $x \in \Arm$ and for every $v \in \Sphere$, $\sum_{i \, \mid\,  x_i = v} r_i < \frac{1}{2} \sum_{i=1}^n r_i$, then we say $x$ is stable (with respect to $r$). 
\label{defn:stable weights and directions}
\end{definition}

\begin{proposition}[\cite{MR0857678}, Section 11]
If $x \in \Arm$ is stable with respect to $r$, then it has a unique conformal barycenter $w_*(x)$ in the interior of $\Disk$.
\end{proposition}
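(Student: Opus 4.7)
The plan is to follow the classical Douady--Earle strategy by realizing the conformal barycenters of $x$ with weights $r$ as critical points of a single scalar merit function $\MeritFun \co \Disk \to \R$, and then showing that $\MeritFun$ is convex along hyperbolic geodesics and coercive under the stability hypothesis. Concretely, I would set
\[
\MeritFun(w) \ceq \sum_{i=1}^{n} r_i \log \frac{\nabs{x_i - w}^2}{1 - \nabs{w}^2},
\]
which is a weighted sum of Busemann functions on the Poincar\'e ball (with this sign convention they tend to $-\infty$ at their basepoint). A direct computation shows that the hyperbolic gradient of the $i$th summand equals $-V_{x_i}(w)$, so the equation $\nabla \MeritFun(w) = 0$ is exactly the conformal barycenter equation $\sum_{i=1}^{n} r_i \, V_{x_i}(w) = 0$.

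For uniqueness I would use that each Busemann function has hyperbolic Hessian equal to $g - V_{x_i}^{\ast} \otimes V_{x_i}^{\ast}$, which is positive semidefinite with kernel spanned by $V_{x_i}(w)$. The weighted Hessian of $\MeritFun$ is therefore positive definite at every $w$ unless all directors $V_{x_i}(w)$ are parallel; this happens only when the $x_i$ take at most two values lying on a single hyperbolic geodesic through $w$. Stability rules out a single cluster carrying all the weight and also rules out the balanced two-antipodal-value case (that would force two clusters of weight exactly $\tfrac{1}{2}\sum_i r_i$). Hence at any $w$ where the Hessian degenerates, the gradient $-\sum_i r_i\, V_{x_i}(w)$ is still nonzero in the degenerate direction. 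Since the set of critical points of a convex function is geodesically convex, and no geodesic of critical points can exist by the previous remark, $\MeritFun$ has at most one critical point in $\Disk$.

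Existence reduces to coercivity of $\MeritFun$, which is the main technical obstacle and the place where stability is essential. Along a radial ray $w(s) = \tanh(s/2)\, v$ parametrized by hyperbolic arclength, the explicit identities $b_v(w(s)) = -s$ and $b_{v'}(w(s)) = s + O(1)$ for $v' \neq v$ give
\[
\MeritFun(w(s)) = s \, \Bigg( \sum_{i \, \mid\, x_i \neq v} r_i - \sum_{i \, \mid\, x_i = v} r_i \Bigg) + O(1),
\]
and \autoref{defn:stable weights and directions} makes the bracketed prefactor strictly positive; hence $\MeritFun$ grows linearly to $+\infty$ along every radial ray from the origin. Combined with the convexity of $\MeritFun$, a standard recession-direction argument for convex functions on Hadamard manifolds upgrades this radial growth to the full coercivity statement $\MeritFun(w) \to +\infty$ as $w \to \partial \Disk$; the details of this reduction are essentially those worked out in \cite[§11]{MR0857678}.

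Combining the two steps, $\MeritFun$ is continuous, convex, and coercive on $\Disk$, so it attains its infimum at a point $w_*(x) \in \Disk$; by the uniqueness argument above the minimizer is unique. Since $\nabla \MeritFun(w_*(x)) = 0$, this $w_*(x)$ is the unique conformal barycenter of $x$ with weights $r$ in the interior of $\Disk$, which is exactly the statement of the proposition. Note in passing that $\MeritFun$ is also the function minimized by the algorithm of \cite{CantarellaSchumacher2022} that the paper relies on elsewhere to compute $w_*(x)$ in practice.
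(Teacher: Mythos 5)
The paper gives no proof of this proposition; it is cited directly to Douady--Earle \cite{MR0857678}, so there is no internal argument to compare against. Your proposal is a correct reconstruction of the classical Douady--Earle convexity argument that the citation points to: realize the barycenter condition as the Euler--Lagrange equation of a weighted sum of Busemann functions, establish coercivity from the linear growth of Busemann functions along rays (with the stability hypothesis making the asymptotic slope strictly positive in every direction, uniformly by lower semicontinuity of the slope over the compact sphere of directions), and establish uniqueness from convexity.

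One place where the argument can be tightened. You observe that the Hessian of $\MeritFun$ can only degenerate when the $x_i$ take at most two values lying on a common geodesic through $w$, and then treat the ``unbalanced two-value'' possibility separately by arguing the gradient survives in the degenerate direction. But stability already eliminates \emph{every} two-value configuration, not just the balanced one: if the $x_i$ take exactly two values $u$ and $v$, their weights partition the total, so one of $\sum_{i\mid x_i=u}r_i$ and $\sum_{i\mid x_i=v}r_i$ is at least $\tfrac12\sum_i r_i$, violating \autoref{defn:stable weights and directions}. Hence stability forces at least three distinct directions among the $x_i$, a geodesic through $w$ meets $\Sphere$ in only two points, and the directors $V_{x_i}(w)$ can never all be parallel for any $w\in\Disk$. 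So under the hypothesis, $\Hess \MeritFun$ is positive definite everywhere, $\MeritFun$ is strictly convex, and uniqueness follows immediately without the extra gradient-nonvanishing argument. This is a streamlining, not a gap; your conclusion is sound.
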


By construction, the conformal barycenter is equivariant under hyperbolic isometries of $\Disk$, i.e. for every hyperbolic isometry $\varphi: \Disk \to \Disk$ we have 
\[
w_*(\varphi(x_1),\dotsc, \varphi(x_n)) = \varphi(w_*(x_1,\dotsc,x_n)),
\]
where we define $\varphi$ on $\Sphere$ to be the unique continuation of $\varphi$ on $\Disk$. 

Now the geodesics from $0$ to each $x \in \Sphere$ are radial lines. Because the Poincar\'e metric at the origin is twice the Euclidean metric we have $V_x(0) = \frac{1}{2} x$. Hence, if $w_*(x_1,\dotsc,x_n) = 0$, then $\sum_{i=1}^n r_i \, x_i = 2 \sum_{i=1}^n r_i \, V_x(0) = 0$ and the polygon is closed. 
By equivariance, we know that if $\varphi$ is a hyperbolic isometry that brings $w_*(x_1,\dotsc,x_n)$ to the origin, then $(\varphi(x_1), \dotsc, \varphi(x_n) )$ will be a closed polygon.
However, there are many such isometries. 
We now choose a specific one.

\begin{definition}\label{defn:shift and conformal barycenter closure}
For any $w \in \Disk$, there is a unique hyperbolic translation which maps $w$ to $0$. We call this a~\emph{shift map} and denote it by $\Shift(w,-)$. This map extends to the sphere at infinity and is given by the formula
\begin{align}
\label{eq:shift}
	\Shift \colon \Disk \times \bar \Disk \to \bar \Disk,
	\quad
	\Shift(w,z) 
	\ceq
	\frac{(1- \nabs{w}^2) \, z - (1 + \nabs{z}^2 - 2 \, \ninnerprod{w,z}) \, w}{1- 2 \ninnerprod{w,z} + \nabs{w}^2 \nabs{z}^2}.
\end{align}
For $x \in \Arm$, we define $\Shift(w,x) \in \Arm$ by $\Shift(w,x)_i = \Shift(w,x_i)$. If $x$ is stable with respect to~$r$, the~\emph{conformal barycenter closure} $y_*(x) \in \Pol$ is defined by~$y_*(x) \ceq \Shift(w_*(x),x)$. The~\emph{conformal closure map} $\cl \colon \Arm \rightarrow \Disk \times \Pol$ is given by 
\begin{align*}
	\cl(x) = \pars[\Big]{ w_*(x), y_*(x) }
	.
\end{align*}
\end{definition}

The conformal barycenter closure gives us a canonical way to close polygons without changing their edgelengths, as long as the initial directions are stable. We can even interpolate between the open polygon $x$ and the closed polygon $y$ by taking $\Shift(t \, w_*,x)$, $t \in \intervalcc{0,1}$, as in \autoref{fig:closure}. Since the conformal barycenter is only defined implicitly, we cannot give a closed-form expression for $\cl$. However, we can evaluate $\cl$
using the algorithm in~\cite{CantarellaSchumacher2022} in $O(n)$ time using the open-source implementation in~\cite{Cantarella_ConformalBarycenter}. Further, we have a closed form for the inverse of $\cl$, which we discuss next. 
\begin{figure}
\begin{center}
	\capstart 
\newcommand{\inc}[2]{\begin{tikzpicture}
    \node[inner sep=0pt] (fig) at (0,0) {\includegraphics{#1}};
	\node[above right= 0ex] at (fig.south west) {\begin{footnotesize}#2\end{footnotesize}};    
\end{tikzpicture}}%
\presetkeys{Gin}{
	trim = 50 40 50 40, 
	clip = true,  
	width = 0.25\textwidth
}{}%
	\inc{closure-animation-1_rendered}{}%
	\inc{closure-animation-2-3_rendered}{}%
	\inc{closure-animation-1-3_rendered}{}%
	\inc{closure-animation-0_rendered}{}
	\caption{%
	From left to right, we interpolate between the original polygon $x$ and its conformal closure $y$ along the path $\Shift(t \, w_*(x),x)$. The edgelength vector $r = (4,6,3,2)$ remains the same throughout. This construction depends on the hypothesis that the initial $(x,r)$ are a stable pair; if not, we could not guarantee the existence of the conformal barycenter $w_*(x)$.}
\label{fig:closure}
\end{center}
\end{figure}
 
\begin{definition}
The~\emph{conformal opening map} $\op \colon \Disk \times \Pol \rightarrow \Arm$ is given by 
\[
\op(w,y) = \Shift(-w,y).
\]
\end{definition}

\begin{figure}[t]
\begin{center}
	\capstart
	\begin{tikzpicture}
	    \node[anchor=south west,inner sep=0] (image) at (0,0,0) {%
		    \includegraphics[width=0.6\textwidth,trim={0 100 0 100}, clip = true]{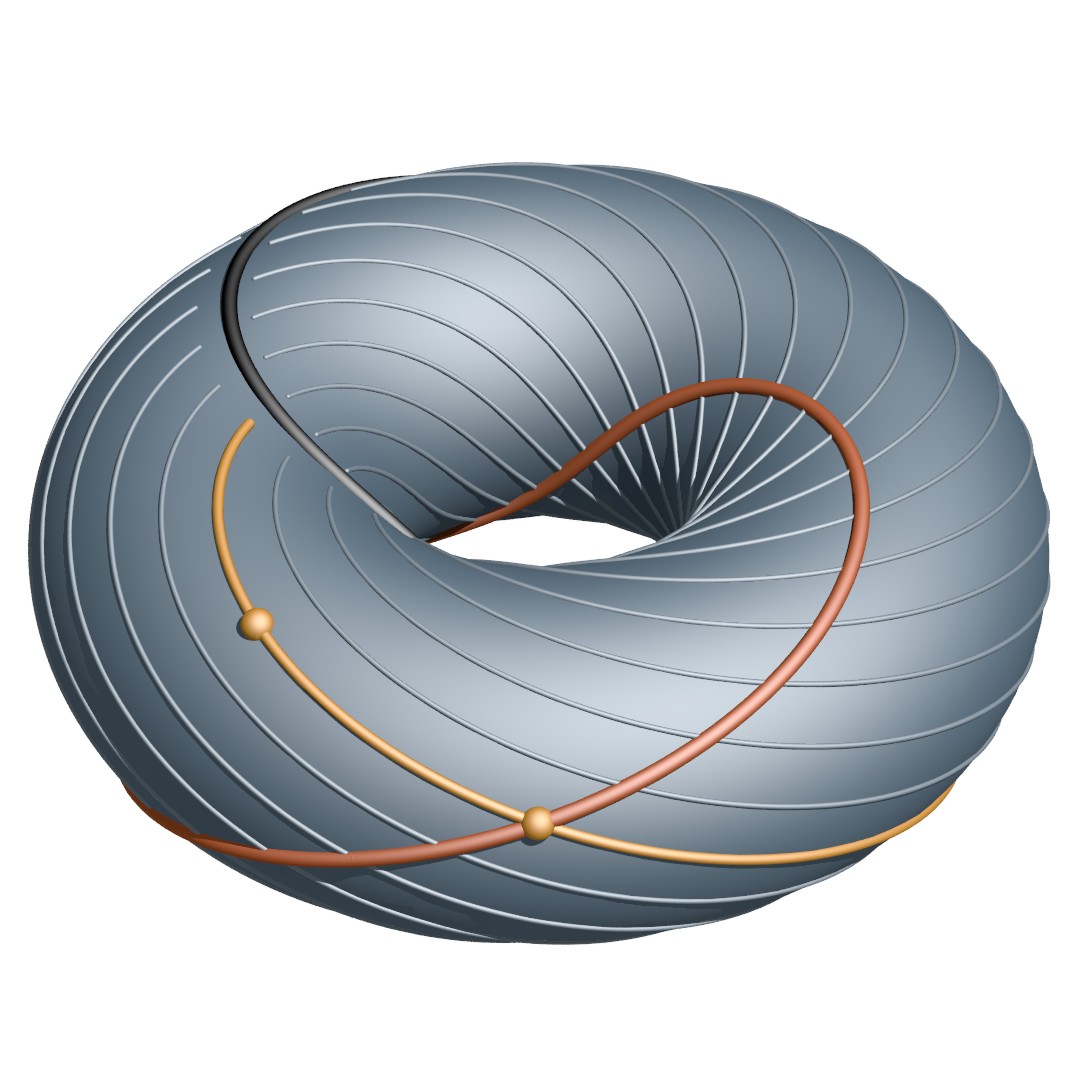}%
		};
	    \begin{scope}[x={(image.south east)},y={(image.north west)}]
			\node at (0.15 ,0.95,0) {$\Arm \setminus \Arm^\times$};
			\draw[line width=0.75pt] (0.16 ,0.92,0) -- (0.215,0.800,0);
			\node at (0.75 ,0.95,0) {$\Arm^\times$};
			\node at (0.95 ,0.85,0) {$\Pol^\times$};
			\draw[line width=0.75pt] (0.94 ,0.81,0) -- (0.8,0.55,0 );
			\node at (0.25 ,0.45,0) {$x$};
			\node at (0.485,0.225,0) {$y$};
			\node at (0.92 ,0.05,0) {$\Shift(\Disk,x)$};
			\draw[line width=0.75pt] (0.88 ,0.08,0) -- (0.795,0.155,0 );
	    \end{scope}
	\end{tikzpicture}
\end{center}
\caption{We will use the conformal barycenter construction to prove that $\Disk \times \Pol^\times$ is diffeomorphic to $\Arm^\times$ by the maps $\cl$ or $\op$~(\autoref{theo:FastJ}). Since $\Disk$ is contractible, this shows that $\Arm^\times$ deformation retracts onto $\Pol^\times$. The picture above shows this schematically: the torus represents the product of spheres $\Arm$ with its submanifold $\Pol$ and thin lines indices the fibers of the conformal barycenter closure which maps $x$ to $y$.}
\label{fig:FiberedTorus}
\end{figure}

We have studied the shift map in detail in~\cite{CantarellaSchumacher2022}. In particular, for $\nabs{w} < 1$, the shift map is a conformal diffeomorphism $\Disk \rightarrow \Disk$ and $\Sphere \rightarrow \Sphere$. We now want to show that $\cl$ and $\op$ are inverse maps. This is not true everywhere, but it is true on subsets of full measure, which will be good enough for our purposes.
\begin{definition}
We let 
$\Arm^\times
	\ceq \set{x \in \Arm | \text{for all $i \neq j$: $x_i \neq x_j$ }}$
and
$	\Pol^\times
	\ceq \Pol \cap \Arm^\times$.
\label{def:pol times and arm times}
\end{definition}

We note that these are open and dense subsets of $\Arm$ and $\Pol$, and hence subsets of full measure in these spaces. Further, if $x \in \Arm^\times$ then $x$ is stable with respect to $r$. 


\begin{proposition}
The restrictions of the maps $\cl$ and $\op$ to $\Arm^\times$ and $\Disk \times \Pol^\times$ are maps $\cl \colon \Arm^\times \rightarrow \Disk \times \Pol^\times$ and $\op \colon \Disk \times \Pol^\times \rightarrow \Arm^\times$ and these maps are inverses of each other.  
\label{prop:closing and opening are inverses}
\end{proposition}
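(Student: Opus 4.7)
The plan is to show three things: that each of the maps $\cl, \op$ restricts appropriately to the indicated domain and target, and then to verify the two composition identities $\op \circ \cl = \id_{\Arm^\times}$ and $\cl \circ \op = \id_{\Disk \times \Pol^\times}$. The single technical workhorse will be that the shift maps form a group under composition in the sense that $\Shift(-w,-) \circ \Shift(w,-) = \id_{\bar \Disk}$; this follows from the explicit formula~\eqref{eq:shift} (indeed, $\Shift(w,-)$ is the hyperbolic translation taking $w$ to $0$ and $\Shift(-w,-)$ takes $-w$ to $0$, hence is the inverse translation) and is already recorded in~\cite{CantarellaSchumacher2022}. In particular, $\Shift(-w,-)$ is a conformal bijection of $\Sphere$ and of $\Disk$, and the same holds componentwise on $\Arm$.

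First I would verify the codomain claims. Given $x \in \Arm^\times$, the entries $x_i$ are all distinct, so for each $v \in \Sphere$ at most one weight $r_i$ contributes to the sum $\sum_{i \st x_i = v} r_i$, which by the standing assumption~\eqref{eq:stable_r} is strictly less than $\tfrac12 \sum_i r_i$; hence $x$ is stable and $w_*(x) \in \Disk$ exists uniquely. Using equivariance of the conformal barycenter under the hyperbolic isometry $\Shift(w_*(x),-)$ gives $w_*(y_*(x)) = \Shift(w_*(x),w_*(x)) = 0$, and then the identity $V_z(0) = \tfrac12 z$ combined with $\sum r_i V_{(y_*(x))_i}(0) = 0$ yields $\sum r_i (y_*(x))_i = 0$, so $y_*(x) \in \Pol$. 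Because $\Shift(w_*(x),-)$ is a bijection on $\Sphere$, distinctness of the $x_i$ carries over to distinctness of the $(y_*(x))_i$, so $y_*(x) \in \Pol^\times$. Dually, for $(w,y) \in \Disk \times \Pol^\times$ the map $\Shift(-w,-)$ is a bijection on $\Sphere$, so the components of $\op(w,y) = \Shift(-w,y)$ remain distinct and $\op(w,y) \in \Arm^\times$.

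For $\op \circ \cl$, a direct computation gives
\[
\op(\cl(x)) = \Shift\!\pars[\big]{-w_*(x), \Shift(w_*(x),x)} = x,
\]
using the group identity $\Shift(-w,\Shift(w,z)) = z$ applied componentwise. For $\cl \circ \op$, fix $(w,y) \in \Disk \times \Pol^\times$ and set $\tilde x \ceq \op(w,y) = \Shift(-w,y)$. Since $y \in \Pol$, running the argument $V_z(0) = \tfrac12 z$ in reverse shows $\sum r_i V_{y_i}(0) = 0$, so $w_*(y) = 0$. Equivariance of the conformal barycenter under the hyperbolic isometry $\Shift(-w,-)$ now gives
\[
w_*(\tilde x) = \Shift\!\pars[\big]{-w, w_*(y)} = \Shift(-w, 0) = w,
\]
where the last equality is immediate from~\eqref{eq:shift}. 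Finally,
\[
y_*(\tilde x) = \Shift(w_*(\tilde x), \tilde x) = \Shift\!\pars[\big]{w, \Shift(-w,y)} = y,
\]
again by the group identity. Hence $\cl(\op(w,y)) = (w,y)$, completing the proof.

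The main potential obstacle is bookkeeping around the group identity for $\Shift$ and the equivariance of $w_*$ under its extension to $\Sphere$; both have been established in earlier work, so I anticipate no serious difficulty.
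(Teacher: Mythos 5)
Your proposal is correct and follows essentially the same route as the paper's own proof: verify that Möbius transformations preserve distinctness to handle the codomain claims, then use equivariance of the conformal barycenter together with the group property $\Shift(-w,\Shift(w,\cdot)) = \id$ to check the two composition identities. The only difference is that you spell out two points the paper leaves to the surrounding discussion — that $w_*(y) = 0$ for $y \in \Pol^\times$ and that $y_*(x)$ is actually closed (via $V_z(0) = \tfrac12 z$) — which is a harmless, slightly more self-contained rendering of the same argument.
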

\begin{proof}
We first observe that for any fixed $w \in \Disk$, the M\"obius transformation $\Shift(w,-)$ is a diffeomorphism from $\Sphere^{d-1}$ to itself. Therefore, $y_i \neq y_j$ implies $\Shift(-w,y_i) \neq \Shift(-w,y_j)$ and $\op$ maps $\Disk \times \Pol^\times$ into $\Arm^\times$. Similarly, $x_i \neq x_j$ implies $\Shift(w_*,x_i) \neq \Shift(w_*,x_j)$ and so $\cl$ maps $\Arm^{\times}$ into $\Disk \times \Pol^\times$. 

We now prove that these maps are inverses of each other. Suppose that $y \in \Pol^\times$. This means that the conformal barycenter $w_*(y) = 0$. By equivariance of the conformal barycenter under hyperbolic isometries, we then have $w_*\pars[\big]{\Shift(-w,y)} = \Shift \pars[\big]{-w,w_*(y)} = \Shift(-w,0) = w$. 
Using that $\sigma(-w,-)$ is the inverse of $\sigma(w,-)$, we can now compute
\begin{align*}
	\cl \pars[\big]{ \op(w,y) } 
	= 
	\cl \pars[\big]{\Shift(-w,y)} 
	&= 
	\pars[\bigg]{ w_* \pars[\big]{\Shift(-w,y)},\Shift \pars[\Big]{w_*\pars[\big]{\Shift(-w,y)},\Shift(-w,y)} } 
	\\ 
	&= \pars[\Big]{ w,\Shift\pars[\big]{w,\Shift(-w,y)} } = (w,y).
\end{align*}
Conversely, suppose that $x \in \Arm^\times$. 
Then 
\[
	\op \pars[\big]{\cl(x)} 
	= 
	\op \pars[\Big]{w_*(x),\Shift \pars[\big]{{-w}_*(x),x} } 
	= 
	\Shift \pars[\Big]{ {-w}_*(x),\Shift\pars[\big]{w_*(x),x } } = x
	,
\]
which completes the proof.
\end{proof}
We will later show that $\Arm^\times$ and $\Disk \times \Pol^\times$ are smooth manifolds (\autoref{lem:poltimes is a mfld}) and that $\cl$ and $\op$ are diffeomorphisms between them (\autoref{theo:FastJ}). In fact, our construction will show that $\Arm^\times$ smoothly deformation retracts onto $\Pol^\times$, as shown in~\autoref{fig:FiberedTorus}. This will require more work. We start by using the change of variables formula to obtain a general formula for integration over $\Pol$. 

\begin{proposition}
\label{prop:sampling weights}
Suppose that $f \colon \Pol \rightarrow \R$ and $\chi \colon \Disk \rightarrow \R$ are integrable functions with $\int \chi(w) \, \dvol_{\Disk} = 1$, where~$\vol_{\Disk}$ is the Lebesgue measure on~$\Disk$. Then
\[
\int_{\Pol} f(y) \, \dvol_{\varrho,r}(y)  = \int_{\Pol^\times} f(y) \, \dvol_{\varrho,r}(y)  =
\int_{\Arm^\times} f(y_*(x)) \, K(\cl(x)) \, \dvol_{\varrho}(x),
\]
where $K(w,y) = \frac{\chi(w)}{J\op(w,y)}$, and $J\op$ is the Jacobian determinant with respect to the metric $g_{\varrho}$ on $\Arm$ and the metric $g_{\Disk} \times g_{\varrho,r}$ on $\Disk \times \Pol$, where $g_{\Disk}$ is the Euclidean metric on $\Disk$.
\end{proposition}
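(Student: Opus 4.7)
The plan is to apply the change-of-variables formula for the (set-theoretic) bijection $\op \colon \Disk \times \Pol^\times \to \Arm^\times$ established in \autoref{prop:closing and opening are inverses}, treating it as a diffeomorphism with Jacobian $J\op$; the smoothness is the forward-referenced content of \autoref{theo:FastJ} (and \autoref{lem:poltimes is a mfld} for the manifold structure), which we take for granted here.

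First, the equality $\int_\Pol f \, \dvol_{\varrho,r} = \int_{\Pol^\times} f \, \dvol_{\varrho,r}$ is immediate: $\Pol \setminus \Pol^\times$ is contained in the finite union of the coincidence sets $\set{y \in \Pol \mid y_i = y_j}$ for $i \neq j$, each of which is cut out by non-trivial smooth equations and hence has positive codimension in $\Pol$, so it is $\vol_{\varrho,r}$-null.

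For the second equality, the standard change-of-variables formula applied to $\op$ (with respect to the Riemannian volume $\vol_\varrho$ on $\Arm^\times$ and the product volume $\vol_{\Disk} \otimes \vol_{\varrho,r}$ on $\Disk \times \Pol^\times$) gives, for any integrable $g \colon \Arm^\times \to \R$,
\[
	\int_{\Arm^\times} g(x) \, \dvol_\varrho(x)
	=
	\int_{\Disk \times \Pol^\times} g(\op(w,y)) \, J\op(w,y) \, \dvol_{\Disk}(w) \, \dvol_{\varrho,r}(y).
\]
Specialising to $g(x) = f(y_*(x)) \, K(\cl(x))$ and invoking \autoref{prop:closing and opening are inverses} — so that $\cl(\op(w,y)) = (w,y)$ and thus $y_*(\op(w,y)) = y$ — the integrand on the right collapses to
\[
	f(y) \, K(w,y) \, J\op(w,y)
	=
	f(y) \, \frac{\chi(w)}{J\op(w,y)} \, J\op(w,y)
	=
	f(y) \, \chi(w).
\]
A direct application of Fubini and the normalisation $\int_\Disk \chi(w) \, \dvol_{\Disk}(w) = 1$ then yields $\int_{\Pol^\times} f(y) \, \dvol_{\varrho,r}(y)$, which is the desired identity.

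The only genuinely non-trivial ingredient is that $\op$ is smooth with an everywhere non-vanishing Jacobian, so that $J\op$ is well-defined and positive — without this, $K$ would be meaningless and the change of variables unjustified. Since this smoothness/diffeomorphism statement is deferred to \autoref{theo:FastJ}, the proof above is essentially a bookkeeping exercise, and the main work of the section will be to establish that deferred result.
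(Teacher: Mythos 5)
Your proof is correct and follows essentially the same route as the paper: a change-of-variables via the diffeomorphism between $\Arm^\times$ and $\Disk\times\Pol^\times$ (deferring the smoothness/non-degeneracy of $\op$ to \autoref{theo:FastJ}), followed by specializing to a product integrand and invoking the normalization of $\chi$. The only cosmetic difference is that you write the substitution directly in terms of $\op$ and $J\op$, whereas the paper writes it in terms of $\cl$ and then converts $J\cl$ to $1/(J\op\circ\cl)$; both are the same calculation.
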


\begin{proof}
Suppose we have an integrable function $h \colon \Disk \times \Pol \to \R$ that we want to integrate with respect to the product measure~$\vol_{\Disk} \times \vol_{\varrho,r}$. We first observe that it suffices to integrate~$h$ over $\Disk \times \Pol^\times$ as the result will be the same. 

Assuming that $\cl \colon \Arm^\times \rightarrow \Disk \times \Pol^\times$ is a diffeomorphism (\autoref{theo:FastJ}), we can use the change of variables formula to pull back the integral of $h$ to $\Arm^\times$, writing
\[
\int_{\Disk \times \Pol^\times} h(w,y) \, \dvol_{\Disk}(w) \, \dvol_{\varrho,r}(y)
	=
	\int_{\Arm^\times} h(\cl(x)) \, J\cl(x) \, \dvol_{\varrho}(x).	
\label{eq:TransformationFormula}
\]
Here $J\cl(x)$ is the nonzero Jacobian determinant given by\[
  J\cl(x)
	=
	\sqrt{\det\pars[\big]{ 
		D\cl(x)^{*} D\cl(x)
	}},
\]
where we need to keep in mind that if $\cl(x) = (w,y)$, the differential $D\cl(x)$ is an invertible linear map $D\cl(x) \colon (T_{x} \Arm^\times,g_\varrho) \to (T_{w} \Disk \times T_y \Pol^\times,g_\Disk \times g_{\varrho,r})$. The adjoint $D\cl(x)^{*}$ is defined relative to these inner products. We cannot compute $J\cl(x)$ directly, but $\cl$ is a diffeomorphism. So, applying the change of variables formula to the inverse map $\op$, we know that 
\begin{align*}
	J\cl(x)
	=
	\frac{1}{J\op(\cl(x))},
	\quad
	\text{where}
	\quad
	J\op(w,y)
	=
	\sqrt{\det\pars*{ D\op(w,y)^{*} \, D\op(w,y)}} \neq 0
	.
\end{align*}
As above, $D\op(w,y)^*$ is the Riemannian adjoint. This yields
\begin{align*}
	\int_{\Disk \times \Pol^\times} h(w,y) \, \dvol_{\Disk}(w) \, \dvol_{\varrho,r}(y)
	&=
	\int_{\Arm^\times} \frac{ h(\cl(x))}{J\op(\cl(x))}\, \dvol_\varrho(x).
\end{align*}

We actually want to integrate the integrable function $f \colon \Pol \to \R$. So at this point we specialize this formula by assuming that we have chosen some integrable $\chi \colon \Disk \to \R$ and set $h(w,y) = \chi(w) f(y)$. We then get
\begin{equation}
\begin{aligned}
	\int_{\Pol^\times} f(y) \, \dvol_{\varrho,r}(y)
	&=
	\frac{\int_{\Disk \times \Pol^\times} \chi(w) f(y) \, \dvol_{\Disk}(w) \, \dvol_{\varrho,r}(y)}{\int_{\Disk} \chi(w) \, \dvol_{\Disk}(w)}
	\\
	&=
	\frac{1}{\int_{\Disk} \chi(w) \, \dvol_{\Disk}(w)}
	\int_{\Arm^\times} \frac{h(\cl(x))}{J\op(\cl(x))} \, \dvol_{\varrho}(x).
\end{aligned}
\label{eq:lateformula}
\end{equation}
Now $\cl(x) = (w_*(x),y_*(x))$, so
\begin{align*}
	\frac{h(\cl(x))}{J\op(\cl(x))} 
	=
	f(y_*(x)) \, K(w_*(x),y_*(x))
	\quad
	\text{where}
	\quad
	K(w,y) \ceq \frac{\chi(w)}{J\op(w,y)}.
\end{align*}
If we now add our assumption that $\int_{\Disk} \chi(w) \, \dvol_{\Disk} = 1$,~\eqref{eq:lateformula} becomes the statement of the Proposition, completing the proof.
\end{proof}

We have now expressed our integral over the entire space of closed polygons $y \in \Pol$ in terms of an integral over the subspace of open polygons $x \in \Arm^\times$. It is natural to ask whether we can extend the right hand integral to all of $\Arm$ since the complement of $\Arm^\times$ is a set of measure zero. We cannot: neither the map $\cl$ nor the weight function $K \circ \cl$ are well-defined on all of $\Arm \setminus \Arm^\times$ since this set includes some $x$ that do not have a conformal barycenter.

Of course, we want to compute integrals with respect to the normalized volume (or probability measure) on $\Pol$ given by $\PolProb = \frac{1}{\vol_{\varrho,r}(\Pol)} \dvol_{\varrho,r}$. By the law of large numbers, we may use reweighted sampling to estimate expectations over $\Pol$ as usual:

\begin{corollary}
 If $f \colon \Pol \rightarrow \R$ is integrable and if $x^{(j)}$ is a sequence of independent samples drawn from $\ArmProb$ on $\Arm$, then 
\begin{align}
\frac{
		\sum_{j=1}^N f(y_*(x^{(j)})) \, K(\cl(x^{(j)}))
	}{
		\sum_{j=1}^N K(\cl(x^{(j)})
	}
	\rightarrow
	\int_\Pol f(y) \,\dd \PolProb(y) 
	\quad
	\text{almost surely as $N \rightarrow \infty$.}
\label{eq:MonteCarloFormula}
\end{align}
\label{cor:MonteCarloSampling}
\end{corollary}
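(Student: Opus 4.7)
The plan is to apply the Strong Law of Large Numbers (SLLN) separately to the empirical numerator and denominator of the ratio in~\eqref{eq:MonteCarloFormula}, identify both limits via \autoref{prop:sampling weights}, and conclude by taking the quotient. Since $\Arm^\times$ has full $\ArmProb$-measure in $\Arm$, the samples $x^{(j)}$ lie in $\Arm^\times$ almost surely, so the random variables
\begin{align*}
Z^{(j)} \ceq f\pars{y_*(x^{(j)})} \, K\pars{\cl(x^{(j)})}
\qand
W^{(j)} \ceq K\pars{\cl(x^{(j)})}
\end{align*}
are well-defined almost surely and form i.i.d.\ sequences under $\ArmProb$.

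Next, I would establish integrability and compute the individual limits. Applying \autoref{prop:sampling weights} to the integrable function $\abs{f}$ and to the constant function $1$ (which is integrable because $\Pol$ is a closed subset of the compact manifold $\Arm$, hence $\vol_{\varrho,r}(\Pol)<\infty$) yields
\begin{align*}
\int_{\Arm^\times} \abs{f(y_*(x))} \, K(\cl(x)) \, \dvol_\varrho(x)
	& = \int_\Pol \abs{f(y)} \, \dvol_{\varrho,r}(y) < \infty,
\\
\int_{\Arm^\times} K(\cl(x)) \, \dvol_\varrho(x)
	& = \vol_{\varrho,r}(\Pol) > 0.
\end{align*}
Since $\ArmProb = \vol_\varrho / \vol_\varrho(\Arm)$, this gives $Z^{(1)},W^{(1)} \in L^1(\ArmProb)$ and a strictly positive expectation for $W^{(1)}$. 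The SLLN then asserts that, almost surely,
\begin{align*}
\frac{1}{N} \sum_{j=1}^N Z^{(j)}
	\longrightarrow \frac{1}{\vol_\varrho(\Arm)} \int_\Pol f \, \dvol_{\varrho,r}
\qand
\frac{1}{N} \sum_{j=1}^N W^{(j)}
	\longrightarrow \frac{\vol_{\varrho,r}(\Pol)}{\vol_\varrho(\Arm)} .
\end{align*}
Because the denominator limit is nonzero, the ratio converges almost surely to
\begin{align*}
\frac{\int_\Pol f \, \dvol_{\varrho,r}}{\vol_{\varrho,r}(\Pol)}
	= \int_\Pol f(y) \, \dd \PolProb(y),
\end{align*}
which is the desired identity.

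There is no substantive obstacle: \autoref{prop:sampling weights} provides exactly the change-of-variables identity required to turn the two $\ArmProb$-expectations into integrals over $\Pol$, and the passage from the two individual almost-sure limits to their quotient is just the continuous mapping theorem (division is continuous where the denominator is nonzero). The only point that deserves a moment of care is that \autoref{prop:sampling weights} is stated for a single integrable $f$; applying it to $\abs{f}$ (and to~$1$) secures the $L^1(\ArmProb)$-integrability needed to invoke the SLLN. The auxiliary density $\chi$ from \autoref{prop:sampling weights} is absorbed into $K$ and may be chosen freely subject to $\int_\Disk \chi \, \dvol_\Disk = 1$, so no further compatibility needs to be verified.
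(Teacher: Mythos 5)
Your proof is correct and supplies the standard importance-sampling details the paper leaves implicit (the paper justifies the corollary only by invoking the law of large numbers). One small point your write-up glosses over: deducing $Z^{(1)}\in L^1(\ArmProb)$ from \autoref{prop:sampling weights} applied to $|f|$ requires $K\ge 0$ (equivalently $\chi\ge 0$), since otherwise $\int_{\Arm^\times}|f(y_*(x))|\,K(\cl(x))\,\dvol_\varrho(x)$ need not control the $L^1(\ArmProb)$-norm of $Z^{(1)}$; this nonnegativity is the natural reading of $\chi$ as a probability density and holds for the paper's explicit choice, though \autoref{prop:sampling weights} as stated only requires $\int_\Disk\chi\,\dvol_\Disk=1$.
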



\section{Calculating $J\op(w,y)$}

Our eventual goal is to compute the weight function~$K(w,y)$. We start by proving:
\begin{theorem}\label{theo:FastJ}
For $(w,y) \in \Disk \times \Pol^{\times}$ we have
\begin{align}
	J\op(w,y)
	=
	\pars*{\frac{2}{1-\nabs{w}^{2}}}^{d}
	\,
	\frac{
		\det \pars*{ \sum_{i=1}^{n} r_{i} \, \pars[\big]{I_{d} - y_{i}\,y_{i}\transp} }
	}{
		\sqrt{
			\det \pars*{ \sum_{i=1}^{n} r_{i}^{2}/\varrho_{i}^{2} \, \pars[\big]{I_{d} - y_{i}\,y_{i}\transp} }
		}
	}
	\,
	\pars*{
		\prod_{i=1}^n  \frac{1-\nabs{w}^2}{\nabs{w + y_i}^2}
	}^{d-1}
	\neq 0.
\label{eq:Jop_formula}
\end{align}
For fixed $d$, this can be computed in $O(n)$ time and memory. Since this determinant does not vanish, $\op$ is a diffeomorphism from $\Disk \times \Pol^\times$ to $\Arm^\times$ and hence its inverse map $\cl$ is a diffeomorphism from $\Arm^\times$ to $\Disk \times \Pol^\times$.
\end{theorem}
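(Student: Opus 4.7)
The plan is to factor $D\op(w,y)$ into two pieces whose Jacobians can be computed separately. A direct computation from the shift formula~\eqref{eq:shift} shows that $\op$ applies the M\"obius map $M_w \ceq \Shift(-w,\cdot)$ componentwise, and its restriction to each sphere tangent space is conformal: $(D_y\op(w,y)\,v)_i = \lambda_i\, O_i\, v_i$, where $\lambda_i = (1-\nabs{w}^2)/\nabs{w+y_i}^2$ is the conformal factor at $y_i$ and $O_i \colon T_{y_i}\Sphere \to T_{x_i}\Sphere$ is orthogonal. Extending this formula to all of $T_y\Arm$ yields a map $\tilde B \colon T_y\Arm \to T_x\Arm$ which scales each $(d-1)$-dimensional sphere tangent space by $\lambda_i$, so $J\tilde B = \prod_{i=1}^n \lambda_i^{d-1}$. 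I then factor $D\op = \tilde B \circ \tilde L$ with $\tilde L\colon T_w\Disk \oplus T_y\Pol^\times \to T_y\Arm$ given by $\tilde L(u,v) \ceq \tilde B^{-1}(D_w\op(u)) + v$, so that $J\op = J\tilde B \cdot J\tilde L$ and it remains to compute $J\tilde L$.

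The heart of the argument is an identity for the closure-map push of $\tilde B^{-1}D_w\op$, where $d\pi\colon T_y\Arm \to \R^d$ is the differential of $\pi(y) \ceq \sum_i r_i y_i$. Applying the chain rule to the identity $y_i = \Shift(w,x_i)$ with $y_i$ held fixed and $x_i = M_w(y_i)$ varying yields $\tilde B^{-1}(D_w\op(u))_i = -\partial_1\Shift(w, x_i)\cdot u$, which I expand using~\eqref{eq:shift}. The claim is
\[
d\pi\bigl(\tilde B^{-1} D_w\op(u)\bigr) = \frac{2}{1-\nabs{w}^2}\, M_1\, u
\qquad\text{with}\qquad
M_1 \ceq \textstyle\sum_i r_i\,(I_d - y_i y_i\transp).
\]
This is the main obstacle. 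Brute-force expansion produces many contributions in $w$, $y_i$, and $x_i$, but regrouping via the closure identity $\sum_i r_i y_i = 0$ together with the derived sums $\sum_i r_i \nabs{w+y_i}^2 x_i = 2Rw$ and $\sum_i r_i \nabs{w+y_i}^2 y_i = 2Sw$ (where $R \ceq \sum r_i$ and $S \ceq \sum r_i y_i y_i\transp$) causes everything not proportional to $M_1 u$ to cancel.

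Given this identity, $J\tilde L$ follows from a block-matrix computation. Decompose $T_y\Arm = T_y\Pol^\times \oplus N_y$ orthogonally with respect to $g_\varrho$, where $N_y = \mathrm{image}(d\pi^*)$ and $(d\pi^* a)_i = (r_i/\varrho_i^2)(I_d - y_i y_i\transp)\,a$. A straightforward calculation gives $d\pi\, d\pi^* = M_2 = \sum_i (r_i^2/\varrho_i^2)(I_d - y_i y_i\transp)$, so the Jacobian of $d\pi^*\colon \R^d \to N_y$ is $\sqrt{\det M_2}$, and the orthogonal projection takes the form $P_{N_y} z = d\pi^*(M_2^{-1}\, d\pi(z))$. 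In adapted bases $\tilde L$ has the anti-triangular shape
\[
\tilde L = \begin{pmatrix} \ast & I_{T_y\Pol^\times} \\ P_{N_y}\tilde B^{-1} D_w\op & 0 \end{pmatrix},
\]
so $\abs{\det \tilde L} = \abs{\det(P_{N_y}\tilde B^{-1} D_w\op)}$. Plugging in the key identity gives $P_{N_y}\tilde B^{-1} D_w\op(u) = \frac{2}{1-\nabs{w}^2}\, d\pi^*(M_2^{-1} M_1 u)$, whose determinant as a map $\R^d \to N_y$ equals $\pars{\frac{2}{1-\nabs{w}^2}}^d \det(M_2^{-1} M_1)\sqrt{\det M_2} = \pars{\frac{2}{1-\nabs{w}^2}}^d \det M_1/\sqrt{\det M_2}$. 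Multiplying by $J\tilde B$ yields the announced formula. Nonvanishing follows because $M_1$ and $M_2$ are positive definite for $y\in\Pol^\times$ with $n\geq 3$ (since not all $y_i$ can be parallel), so $\op$ is a local diffeomorphism; combined with~\autoref{prop:closing and opening are inverses}, $\op$ and $\cl$ are mutually inverse diffeomorphisms. For fixed $d$, each of $M_1$, $M_2$, and $\prod_i \lambda_i^{d-1}$ is assembled in $O(n)$ time and memory, while the $d\times d$ determinants are $O(1)$.
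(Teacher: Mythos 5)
Your proof is correct and arrives at the same formula, but it takes a genuinely different route through the Jacobian computation than the paper.

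\textbf{What you share with the paper.} Both arguments factor $D\op$ through the block-conformal ``transport'' map $\tilde{B}$ (the paper's $Z$), peel off the factor $\prod_i \lambda_i^{d-1}$ via the conformal scaling of each $T_{y_i}\Sphere \to T_{x_i}\Sphere$, and then reduce the remaining Jacobian to a $d \times d$ determinant. Both hinge on the same closure identity, which in your notation reads $d\pi\bigl(\tilde B^{-1} D_w\op(u)\bigr) = \tfrac{2}{1-\nabs{w}^2}\,M_1 u$ and in the paper's reads $\varOmega \, C = \tfrac{2}{1-\nabs{w}^2}\sum_i r_i (I_d - y_i y_i\transp)$; these are the same statement because $(\tilde B^{-1} D_w\op(u))_i = \tilde c_i u$. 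I checked your ``derived sums'' $\sum_i r_i \nabs{w+y_i}^2 x_i = 2Rw$ and $\sum_i r_i \nabs{w+y_i}^2 y_i = 2Sw$: both follow from $\sum r_i y_i = 0$ and the explicit form of $\hShift$, so your sketch of the cancellation is sound, though you do not carry out the brute-force expansion. The paper instead proves the identity by first deriving a closed form for each $\tilde c_i$ via a careful $2\times 2$ block computation in the plane $\operatorname{span}\{w, y_i\}$, and then summing.

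\textbf{Where you diverge.} The paper computes $\det(A\adj A)$ by embedding into the ambient linear spaces $\R^d \times (\R^d)^n$, replacing it with the augmented matrix $P\,\tilde A\adj \tilde A\,P + I - P$, deriving an explicit formula for the orthogonal projector $P$ from the constraint map $\varTheta$, and then applying Schur's complement formula twice. You instead work intrinsically: you orthogonally split $T_y\Arm = T_y\Pol^\times \oplus N_y$ with $N_y = \ima(d\pi\adj)$, observe that $\tilde L$ is anti-triangular in adapted orthonormal bases with an identity block in the upper right, and so reduce $\nabs{\det \tilde L}$ to the determinant of the single $d \times d$ composite $\R^d \to \R^d \to N_y$. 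The conclusion $J\tilde L = \pars*{\tfrac{2}{1-\nabs{w}^2}}^d\,\det M_1 / \sqrt{\det M_2}$ agrees with the paper's; multiplying by $J\tilde B = \prod_i \lambda_i^{d-1}$ recovers~\eqref{eq:Jop_formula}.

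\textbf{Trade-offs.} Your anti-triangular decomposition avoids writing down the explicit projector $P$, the factor $E$, and the augmented-determinant lemma, so the structural part of the argument is shorter and more geometric. The price you pay is that the single nontrivial step---verifying the closure identity---is left as a sketch, whereas the paper makes it a self-contained proposition (with the bonus facts $(I_d - y_i y_i\transp)\tilde c_i = \tilde c_i$ and the explicit $\tilde c_i$ formula, the former of which you implicitly need when you write $\tilde B^{-1}D_w\op(u) \in T_y\Arm$). To turn your outline into a complete proof you would either need to grind through the expansion of $-\partial_1 \hShift(w,x_i)\cdot u$ using your derived sums, or import the paper's explicit formula for $\tilde c_i$.
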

This theorem is surprising, because the matrix $D\op(w,y)^* \, D\op(w,y)$ is of size $n \, (d-1) \times n \, (d-1)$, so we would expect its determinant to require $O(n^3)$ time and $O(n^2)$ memory to compute. However,~\eqref{eq:Jop_formula} only contains $d \times d$ determinants and it so may be evaluated in $O(n)$ time and memory. 

The proof of this theorem will require us to do some detailed matrix computations. Accordingly, we take a moment to establish a system of coordinates and maps, along with some notation.
\begin{lemma} 
We may extend $\op \colon \Disk \times \Pol^\times \subset \Disk \times \Sphere^n \to \Arm^\times \subset \Sphere^n$ to a smooth map $\hop \colon U \rightarrow (\R^d)^n$ where $U \subset (\R^d)^{n+1}$ is an open neighborhood of $\Disk \times \bar{\Disk}^n$ by extending $\Shift$ to $\hShift \colon V \to \R^d$, where $V \subset \R^d \times \R^d$ is an open neighborhood of $\Disk \times \bar{\Disk}$. 
\end{lemma}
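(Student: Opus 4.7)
The plan is straightforward: exploit the fact that the formula~\eqref{eq:shift} for $\Shift$ is a rational function in $(w,z)$ whose denominator does not vanish on $\Disk \times \bar\Disk$, and then extend the formula by continuity to an open neighborhood where the denominator remains nonzero.

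First I would focus on the denominator $D(w,z) \ceq 1 - 2 \ninnerprod{w,z} + \nabs{w}^{2} \nabs{z}^{2}$ appearing in~\eqref{eq:shift}. This is a polynomial in the $2d$ coordinates of $(w,z) \in \R^d \times \R^d$, and by Cauchy--Schwarz
\[
D(w,z) \geq 1 - 2\nabs{w}\nabs{z} + \nabs{w}^2 \nabs{z}^2 = (1 - \nabs{w}\nabs{z})^2 \geq 0,
\]
with equality forcing both $\nabs{w}\nabs{z} = 1$ and $\ninnerprod{w,z} = \nabs{w}\nabs{z}$. On $\Disk \times \bar\Disk$ we have $\nabs{w} < 1$ and $\nabs{z} \leq 1$, so $\nabs{w}\nabs{z} < 1$ and consequently $D(w,z) > 0$ throughout $\Disk \times \bar\Disk$.

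Next I would define the open set $V \ceq \set{(w,z) \in \R^d \times \R^d | D(w,z) > 0}$, which is the preimage of $(0,\infty)$ under the continuous (polynomial) map $D$, hence open; by the previous step it contains $\Disk \times \bar\Disk$. Since the numerator in~\eqref{eq:shift} is also polynomial in $(w,z)$, the same formula now defines a smooth map $\hShift \colon V \to \R^d$ which agrees with $\Shift$ on $\Disk \times \bar\Disk$.

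Finally, I would lift this extension to the opening map by setting
\[
U \ceq \Set{(w,y_1,\dotsc,y_n) \in (\R^d)^{n+1} | \text{$(-w,y_i) \in V$ for all $i=1,\dotsc,n$}}
\]
and defining $\hop(w,y_1,\dotsc,y_n)_i \ceq \hShift(-w,y_i)$. The set $U$ is the preimage of $V^n$ under the continuous map $(w,y_1,\dotsc,y_n) \mapsto ((-w,y_1),\dotsc,(-w,y_n))$, hence open; since $V$ contains $\Disk \times \bar\Disk$ and the reflection $w \mapsto -w$ preserves $\Disk$, the set $U$ contains $\Disk \times \bar\Disk^n$. Smoothness of $\hop$ is immediate from smoothness of $\hShift$ and of the negation map, and $\hop$ manifestly restricts to $\op$ on $\Disk \times \Pol^\times \subseteq \Disk \times \bar\Disk^n$, completing the argument. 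There is no real obstacle in the proof; the only subtle point is the Cauchy--Schwarz bound that guarantees strict positivity of the denominator on $\Disk \times \bar\Disk$, which opens the door to continuity-based extension.
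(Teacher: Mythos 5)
Your proof is correct and follows essentially the same approach as the paper: both rely on the Cauchy--Schwarz bound $D(w,z) \geq (1-\nabs{w}\nabs{z})^2$ to see that the denominator is strictly positive on $\Disk \times \bar\Disk$, then extend by the same rational formula to an open neighborhood. The only cosmetic difference is in how you package the open set $V$: you take the preimage $\set{D > 0}$ of $(0,\infty)$ under the polynomial $D$ (the largest set on which the formula makes sense), whereas the paper uses the explicit union $\bigcup_{0<\lambda<1}(\lambda\Disk)\times(\tfrac{1}{\lambda}\Disk)$, which is the set $\set{\nabs{w}\nabs{z}<1,\ \nabs{w}<1}$ — a smaller but equally serviceable neighborhood.
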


\begin{proof} Using~\eqref{eq:shift} we may define the extension
\[
\hShift(w,z) 
	\ceq
	\frac{(1- \nabs{w}^2) \, z - (1 + \nabs{z}^2 - 2 \, \ninnerprod{w,z}) \, w}{1 - 2 \ninnerprod{w,z} + \nabs{w}^2 \, \nabs{z}^2}.
\]
The right hand side is in $\R^d$ and a smooth function defined for all $w,\,z \in \R^d$ where the denominator does not vanish. Now the denominator is
\[
	1 - 2 \, \ninnerprod{w, z} + \abs{w}^2 \, \abs{z}^2 \geq 1 - 2 \,\abs{w} \,\abs{z} + \abs{w}^2 \, \abs{z}^2 = (1 - \abs{w} \, \abs{z})^2,
\]
so it does not vanish when $\abs{w} \, \abs{z} < 1$. Thus we may define
\[
	V \ceq \bigcup_{0 < \lambda < 1} \pars*{ \lambda \, \Disk } \times \pars*{\tfrac{1}{\lambda} \, \Disk} \supset \Disk \times \bar{\Disk}.
\]
As a union of open sets, the set $V$ is clearly open. Now $\op(w,y) = \pars[\big]{\Shift(-w,y_1), \dotsc, \Shift(-w,y_n) }$, so we may let $\hop(w,y) \ceq \pars[\Big]{\hShift(-w,y_1), \dotsc, \hShift(-w,y_n)}$ and observe that $\hop$ is defined on the analogous set
\[
	U 
	\ceq 
	\bigcup_{0 < \lambda < 1} \pars*{ \lambda \, \Disk } \times \pars*{\tfrac{1}{\lambda} \, \Disk }^n \supset \Disk \times \bar{\Disk}^n. \qedhere
\]
\end{proof}
We will now assume $w \in \Disk$ and $y \in \Pol^\times$, define $x \in \Arm^\times$ by $x \ceq \op(w,y)$, and 
compute $D\op$ as the restriction of $D\hop$ to the linear subspace $T_{(w,y)} (\Disk \times \Pol^\times) \subset T_{(w,y)} (\R^d)^{n+1}$ in the usual coordinates on $(\R^d)^{n+1}$.  
Now we have the following commutative diagram:
\[
\begin{tikzcd}
	{T_{(w,y)}(\mathbb{B} \times \mathrm{Pol}^\times)} && {T_x \mathrm{Arm}^\times} \\
	\\
	{T_{(w,y)} (\mathbb{R}^d)^{n+1}} && {T_x(\mathbb{R}^d)^n}
	\arrow["{D\widetilde{\mathrm{op}}(w,y)}", from=3-1, to=3-3]
	\arrow["{D\mathrm{op}(w,y)}", from=1-1, to=1-3]
	\arrow[hook, from=1-1, to=3-1]
	\arrow[hook, from=1-3, to=3-3]
\end{tikzcd}
\]
Our next goal is to factor these maps through $T_y \Arm^\times$ and $T_y (\R^d)^n$. To so do, we examine the structure of $D\hop$ as a block matrix. We define $d \times d$ matrices
\begin{align}
	\tilde{a}_i \ceq -D_1 \hShift(-w,y_i) 
	\qand 
	\tilde{b}_i \ceq  D_2 \hShift(-w,y_i)
\label{eq:hatai and hatbi}
\end{align}
where $D_1$ and $D_2$ are the derivatives of $\hShift(-,-)$ with respect to the first and second (vector) arguments. Then we have
\begin{align}
D\hop(w,y) &= \begin{pmatrix}
\tilde{a}_1 & \tilde{b}_1 & 0 & \dotsm & 0 \\
\vdots & 0 & \ddots  & \ddots & \vdots \\
\vdots & \vdots & \ddots & \ddots & 0 \\
\tilde{a}_n & 0 & \dotsm & 0 & \tilde{b}_n 
\end{pmatrix} = \begin{pmatrix}
		\vect(\tilde{a})
		&
		\diag(\tilde{b})
	\end{pmatrix}.
\end{align}
Here $\vect(\tilde{a})$ denotes the $(n \, d) \times d$ matrix that results from the $d \times d$ matrices $\tilde{a}_{1},\dotsc,\tilde{a}_{n}$ by stacking them on top of each other and $\diag(\tilde{b})$ denotes the $(n\,d) \times (n\,d)$ block diagonal matrix with the $d \times d$ matrices $\hat{b}_i$ on the diagonal. Since $\hShift(-w,-)$ is a M\"obius transformation, its derivative $D_2 \hShift(-w,-)$ is invertible wherever it is defined. Thus the $\tilde{b}_i$ are invertible matrices for $(w,y) \in U$. 

We let $\tilde{Z} \ceq \diag(\tilde{b})$. This is $D\hop(w,-)$ evaluated at the point $y$. Since $\tilde{Z}$ is block diagonal and the blocks are invertible, $\tilde{Z}$ is invertible. Since $\tilde{Z}$ is the derivative of $\hop(w,-)$ at $y$, it must map $T_y (\R^d)^n \to T_x(\R^d)^n$. Analogously, we let $Z \ceq D\op(w,-)$ at $y$. It follows that $Z: T_y \Arm^\times \to T_x \Arm^\times$. Further, $Z$ is the block product of invertible linear maps $b_i \ceq D_2 \Shift(-w,y_i)$, which map $T_{y_i} \Sphere^{d-1}$ to $T_{x_i} \Sphere^{d-1}$, so $Z$ is invertible. It is also the restriction of $\tilde{Z}$ to $T_y \Arm^\times \subset T_y (\R^d)^n$. 

Now we can factor $D\hop(w,y) = \tilde{Z} \, \tilde{A}$ and $D\op(w,y) = Z \, A$, where $\tilde{A} \ceq \tilde{Z}^{-1} D\hop(w,y)$ and $A \ceq Z^{-1} D\op(w,y)$. 
Note that $\tilde{A}$ takes the following simple form:
\begin{align}
	\tilde{A} 
	&= \begin{pmatrix}
		\tilde{c}_1 & I_d & 0 & \dotsm & 0 \\
		\vdots & 0 & \ddots  & \ddots & \vdots \\
		\vdots & \vdots & \ddots & \ddots & 0 \\
		\tilde{c}_n & 0 & \dotsm & 0 & I_d
	\end{pmatrix} 
	\quad
	\text{with}
	\quad
	\tilde{c}_i \ceq \tilde{b}_i^{-1} \, \tilde{a}_i
	.
	\label{eq:hatci}
\end{align}
We summarize this construction by the commutative diagram
\[\begin{tikzcd}
	{T_{(w,y)}(\mathbb{B} \times \mathrm{Pol}^\times)} &&& {T_x \mathrm{Arm}^\times} \\
	&& {T_y \mathrm{Arm}^\times} \\
	{T_{(w,y)} (\mathbb{R}^d)^{n+1}} &&& {T_x(\mathbb{R}^d)^n} \\
	&& {T_y (\mathbb{R}^d)^n}
	\arrow["{D\widetilde{\mathrm{op}}(w,y)}", shorten >=38pt, no head, from=3-1, to=3-4]
	\arrow["{D\mathrm{op}(w,y)}", from=1-1, to=1-4]
	\arrow[hook, from=1-1, to=3-1]
	\arrow[hook, from=1-4, to=3-4]
	\arrow["A", from=1-1, to=2-3]
	\arrow["{\tilde{A}}", from=3-1, to=4-3]
	\arrow["Z", from=2-3, to=1-4]
	\arrow["{\tilde{Z}}", from=4-3, to=3-4]
	\arrow[hook, from=2-3, to=4-3]
	\arrow[shorten <=77pt, from=3-1, to=3-4]
\end{tikzcd}\]
This factorization of $D\op$ leads us to the following observation:

\begin{lemma}\label{lem:J=det(AA)det(ZZ)}
\begin{align*}
	J\op(w,y)^2
	=
	\det\pars[\Big]{ D\op(w,y)\adj \, D\op(w,y) }
	=
	\det( A\adj \, Z\adj \, Z\,A )
	=
	\det( A\adj \, A ) \, \det(Z\adj \, Z)
	.
\end{align*}
\end{lemma}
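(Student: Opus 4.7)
The plan is to combine the factorization $D\op(w,y) = Z \circ A$ already established in the commutative diagram with the multiplicativity of the determinant for \emph{square} linear maps between inner product spaces.

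First I would use the factorization directly: substituting $D\op(w,y) = Z A$ into the definition $J\op(w,y) \ceq \sqrt{\det(D\op(w,y)\adj \, D\op(w,y))}$ immediately gives
\[
J\op(w,y)^2 = \det\pars[\big]{ (ZA)\adj (ZA) } = \det(A\adj Z\adj Z A),
\]
which is the first two equalities of the claim.

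For the third equality, the key observation is that both $A$ and $Z$ are square linear maps between spaces of the same dimension. Indeed, $Z \colon T_y \Arm^\times \to T_x \Arm^\times$ is an invertible restriction of the M\"obius transformation derivative, and both spaces have dimension $n(d-1)$. For $A \colon T_{(w,y)}(\Disk \times \Pol^\times) \to T_y \Arm^\times$, the closure constraint $\sum r_i y_i = 0$ cuts out a submanifold of codimension $d$ in $\Arm$, so $\dim \Pol^\times = n(d-1) - d$, and the domain of $A$ has dimension $d + (n(d-1) - d) = n(d-1)$, matching the codomain. Choosing orthonormal bases for each of the three tangent spaces (relative to $g_\Disk \times g_{\varrho,r}$, $g_\varrho$ restricted to $T_y \Arm^\times$, and $g_\varrho$ restricted to $T_x \Arm^\times$, respectively), the adjoints $A\adj$ and $Z\adj$ become matrix transposes. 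Multiplicativity of the determinant for square matrices then yields
\[
\det(A\adj Z\adj Z A) = \det(A)^2 \det(Z)^2 = \det(A\adj A) \, \det(Z\adj Z),
\]
completing the proof.

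The only subtle point is that this factorization of the determinant is genuinely special to the square case; if $A$ were rectangular (e.g.\ with more rows than columns), then $\det(A\adj A)$ would be a Cauchy--Binet sum of squared minors which does not split as a product with $\det(Z\adj Z)$. Thus the dimension count for $\Pol^\times$ is not mere bookkeeping but is what makes the lemma hold, and is the only step requiring any care.
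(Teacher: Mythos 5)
Your proof is correct and follows the same approach as the paper: both reduce the third equality to the observation that $A$ and $Z$ can be represented as square matrices because the three tangent spaces $T_{(w,y)}(\Disk\times\Pol^\times)$, $T_y\Arm^\times$, and $T_x\Arm^\times$ all have the same dimension $n(d-1)$, and then use multiplicativity of the determinant. Your write-up is slightly more explicit than the paper's (which simply says the spaces ``all have the same dimension'') in that you carry out the dimension count and spell out the choice of orthonormal bases so that adjoints become transposes, but the underlying idea is identical.
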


\begin{proof}
The first two equalities are definitions. The last follows from the fact that the spaces $T_{(w,y)} \Disk \times \Pol^\times$, $T_y \Arm^\times$ and $T_x \Arm^\times$ all have the same dimension. Therefore, we could write the linear maps $A$ and $Z$ as square matrices, allowing us to reorder before taking determinants. 
\end{proof}
Now we must compute $\det(Z\adj \, Z)$ and $\det(A\adj \, A)$. 
\begin{lemma}\label{lem:detSS} We have
\begin{align*}
	\det(Z\adj \, Z)
	=
	\prod_{i=1}^n 
	\pars*{
		\frac{1-\nabs{w}^2}{\nabs{w + y_i}^2}
	}^{2\,(d-1)}
	,
\end{align*}
which cannot vanish because $\nabs{y_i} = 1$ and $\nabs{w} < 1$. 
\end{lemma}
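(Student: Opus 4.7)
The plan is to exploit the block-diagonal structure of $Z$ together with the fact that $\Shift(-w,-)$ restricts to a conformal diffeomorphism of $\Sphere$.

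First I would observe that $Z$ is block-diagonal with $i$-th block $b_i \ceq D_2 \Shift(-w,y_i) \colon T_{y_i}\Sphere \to T_{x_i}\Sphere$, where both tangent spaces carry the restriction of $g_\varrho$ (namely $\varrho_i^{2}\, g_\Sphere$). Thus $Z\adj Z$ is itself block-diagonal with blocks $b_i\adj b_i$, giving $\det(Z\adj Z) = \prod_{i=1}^n \det(b_i\adj b_i)$. Because the scalar $\varrho_i^{2}$ appears symmetrically in the source and target inner products of $b_i$, its adjoint with respect to $\varrho_i^{2}\, g_\Sphere$ agrees with the adjoint with respect to $g_\Sphere$; I can therefore work with the unweighted sphere metric, treating $b_i$ as a linear map between the Euclidean $(d-1)$-planes $T_{y_i}\Sphere,\, T_{x_i}\Sphere \subset \R^d$.

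The second step is to show that each $b_i$ is conformal with factor $\lambda_i \ceq (1-\nabs{w}^{2})/\nabs{w+y_i}^{2}$. I would verify this by direct differentiation of $\hShift$ from~\eqref{eq:shift}. For $z \in \Sphere$ the denominator simplifies to $\nabs{w+z}^{2}$, and applying the quotient rule to $\hShift(-w,z) = N(z)/D(z)$ in direction $v \in T_z\Sphere$ (so that $\ninnerprod{z,v}=0$) leads, after invoking the key identity
\[
	\nabs{w+z}^{2}\, w - N(z) = -(1-\nabs{w}^{2})\,(w+z),
\]
to the simplified expression
\[
	D_2 \hShift(-w,z)[v]
	=
	\frac{1-\nabs{w}^{2}}{\nabs{w+z}^{2}}
	\,\biggl( v - \frac{2\,\ninnerprod{w,v}}{\nabs{w+z}^{2}}\,(w+z) \biggr).
\]
Taking Euclidean norms and using $\ninnerprod{v,w+z}=\ninnerprod{v,w}$ causes the cross term to cancel exactly against the square of the correction, yielding $\nabs{D_2 \hShift(-w,z)[v]} = \lambda(z)\,\nabs{v}$ for every $v \in T_z \Sphere$.

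Given this conformality, $b_i\adj b_i = \lambda_i^{2}\, \id$ on the $(d-1)$-dimensional space $T_{y_i}\Sphere$, so $\det(b_i\adj b_i) = \lambda_i^{2(d-1)}$; taking the product over $i$ gives the claimed formula. Non-vanishing is immediate from $\nabs{w}<1$ and the fact that $w+y_i \neq 0$ since $\nabs{y_i}=1$ while $\nabs{w}<1$. I expect the main obstacle to be precisely the algebraic identity displayed above: the collapse of the Möbius derivative into a pure conformal scaling hinges on that one cancellation, and it is the only non-routine step in the computation.
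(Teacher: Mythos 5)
Your proposal is correct and follows essentially the same strategy as the paper: factor $\det(Z\adj Z)$ into a product over the block-diagonal pieces $b_i\adj b_i$, use that the $\varrho_i^2$ weight does not affect the adjoint, and exploit the conformality of the M\"obius derivative on the sphere to get $\det(b_i\adj b_i)=\lambda_i^{2(d-1)}$. The only difference is that you carry out the differentiation of $\hShift$ and verify the conformal factor $\lambda_i=(1-\nabs{w}^2)/\nabs{w+y_i}^2$ by direct computation (your displayed algebraic identity and the cancellation of the cross term both check out), whereas the paper simply invokes conformality of M\"obius derivatives and states the factor without derivation.
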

\begin{proof}
The map 
\begin{align*}
	Z\adj \, Z \colon 
	T_y \Arm = T_{y_1} \Sphere \times \dotsm \times  T_{y_n} \Sphere 
	\to 
	T_y \Arm = T_{y_1} \Sphere \times \dotsm \times T_{y_n} \Sphere
\end{align*}
is the direct product of $b_i^*b_i \colon T_{y_i} \Sphere \to T_{y_i} \Sphere$, where we recall
$b_i \colon T_{y_i} \Sphere \to T_{x_i} \Sphere$ is the map induced by $\tilde{b}_i$ and where the adjoint is with respect to the Riemannian metric $\varrho_i^2 \, g_\Sphere$ on $\Sphere$. Thus
\begin{align*}
	\det(Z\adj \, Z)
	=
	\prod_{i=1}^n \det(b_i^* b_i).
\end{align*}
Note that $\tilde{b}_i \colon \pars[\big]{ \R^d, \ninnerprod{\cdot,\cdot}_{\R^d}} \to \pars[\big]{ \R^d, \ninnerprod{\cdot,\cdot}_{\R^d}}$ is a conformal map as it is the derivative of the M\"obius transformation $\Shift(-w,-)$. Its conformal factor $\lambda_i$ with respect to the standard metric on $\R^d$ can be computed easily and is
\begin{align*}
	\lambda_i \ceq \frac{1-\nabs{w}^2}{1+ 2 \, \ninnerprod{w,y_i} + \nabs{w}^2} = \frac{1 - \nabs{w}^2}{\nabs{w + y_i}^2}
\end{align*}
using the fact that $\nabs{y_i} = 1$. Since $\tilde{b}_i$ maps $T_{y_i} \Sphere$ into $T_{x_i} \Sphere$, the induced mapping $b_i$ has the same conformal factor with respect to the standard metric $g_\Sphere$. Scaling $g_\Sphere$ by $\varrho_i^2$ does not change the conformal factor of $b_i$. The Riemannian adjoint $b_i^*$ is also conformal with the same conformal factor $\lambda_i$. Hence $b_i^* b_i$ has conformal factor $\lambda_i^2$. Since $T_{y_i} \Sphere$ is $(d-1)$-dimensional, we get $\det(b_i^* b_i) = \lambda_i^{2\,(d-1)}$, and the result follows. 
\end{proof}

We now set out to compute $\det(A^*A)$. We will do this in several steps. 

\begin{lemma}\label{lem:detAA}
Consider $\R^d \times (\R^d)^n$ equipped with the product metric of the standard inner product on $\R^d$ and the rescaled inner product 
$$
\ninnerprod{u,v}_\varrho = \sum_{i=1}^n \varrho_i^2 \, \ninnerprod{u_i,v_i}
$$
on $(\R^d)^n$ and let $P$ be the orthogonal projector onto the subspace $T_w \Disk \times T_y \Pol^\times$. Then  
\begin{align*}
	\det( A\adj \, A )
	=
	\det( P \, \tilde{A}\adj \, \tilde{A}\, P + I_{(n+1)\,d} - P) 
	.
\end{align*}
where adjoints are with respect to the inner product.
\end{lemma}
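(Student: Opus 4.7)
The plan is to decompose the ambient space $E \ceq \R^d \times (\R^d)^n$ orthogonally as $E = W \oplus W^\perp$, where $W \ceq T_w \Disk \times T_y \Pol^\times$, and to show that with respect to this decomposition the operator $P \, \tilde{A}\adj \, \tilde{A} \, P + I_{(n+1)\,d} - P$ is block-diagonal with blocks $A\adj A$ on $W$ and $I$ on $W^\perp$. Then its determinant factors as $\det(A\adj A) \cdot 1$, which is the desired identity.

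The action on $W^\perp$ is immediate: for $v \in W^\perp$, $Pv = 0$ kills the first summand while $(I - P)v = v$ leaves the identity. To analyze the action on $W$, I would use that $A$ is the restriction of $\tilde{A}$ to $W$ and that $\tilde{A}(W) = A(W)$ lies in $F \ceq T_y \Arm^\times \subset T_y(\R^d)^n$. Thus for $v \in W$ we have $\tilde{A} P v = A v \in F$, and the key identity to establish is
\begin{align*}
P \, \tilde{A}\adj z = A\adj z \qquad \text{for every } z \in F.
\end{align*}
This follows by testing both sides against an arbitrary $u \in W$: since the inner products on $W$ and on $F$ are the restrictions of those on $E$ and on $T_y(\R^d)^n$, one computes
\begin{align*}
\ninnerprod{A\adj z, u}_W
= \ninnerprod{z, Au}_F
= \ninnerprod{z, \tilde{A}u}_{T_y(\R^d)^n}
= \ninnerprod{\tilde{A}\adj z, u}_E
= \ninnerprod{P \, \tilde{A}\adj z, u}_W,
\end{align*}
where the last equality uses $u \in W$, so that the $W^\perp$-component of $\tilde{A}\adj z$ contributes nothing. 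Applying this with $z = Av$ yields $P \, \tilde{A}\adj \, \tilde{A} \, v = A\adj A \, v$ for $v \in W$, as required.

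To confirm block-diagonality, note that $A\adj A \, v \in W$ for $v \in W$ and that the operator sends $W^\perp$ into itself (as the identity), so both off-diagonal blocks vanish. The determinant of a block-diagonal operator is the product of the determinants of its blocks, giving $\det(A\adj A) \cdot \det(I_{W^\perp}) = \det(A\adj A)$.

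The only delicate point, and thus the main obstacle, is tracking the interplay of adjoints with the embeddings $W \hookrightarrow E$ and $F \hookrightarrow T_y(\R^d)^n$ together with their induced inner products. Once this is handled correctly, the argument is purely elementary linear algebra and uses nothing specific about $\op$, $\Pol^\times$, or the chosen weights $\varrho$; the lemma is a general statement about restrictions of linear maps to subspaces equipped with induced inner products.
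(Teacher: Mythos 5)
Your proof is correct and follows essentially the same strategy as the paper: both arguments rest on the observation that $P \, \tilde{A}\adj \, \tilde{A}\, P + I - P$ acts as $A\adj A$ on $W = T_w \Disk \times T_y \Pol^\times$ and as the identity on $W^\perp$, so the determinant factors accordingly. The paper phrases this by choosing an orthonormal eigenbasis of $A\adj A$ in $W$, extending it across $W^\perp$, and reading off the determinant as the product of eigenvalues; you bypass the diagonalization and argue block-diagonality directly, which is equivalent. One modest advantage of your write-up is that you isolate and verify the compatibility identity $P\,\tilde{A}\adj z = A\adj z$ for $z$ in the codomain of $A$: the paper's assertion that the eigenvectors of $A\adj A$ are also eigenvectors of $P \, \tilde{A}\adj \, \tilde{A}\, P$ ``because $A$ coincides with $\tilde{A}P$ on $W$'' silently relies on exactly this adjoint-compatibility between the restricted and ambient metrics, which you spell out.
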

\begin{proof}
First, $A^*A$ and $P \, \tilde{A}\adj \, \tilde{A}\, P = P^* \tilde{A}^* \tilde{A} P$ are self-adjoint matrices and may be diagonalized. 
Let $e_1, \dots, e_{(n-1) \, d} \in T_w \Disk \times T_y \Pol^\times$ be the orthonormal eigenvectors and
$\lambda_1, \dots, \lambda_{d\, (n-1)}$ be the corresponding eigenvalues of  $A^*A$.
Because $A$ coincides with $\tilde{A} \, P$ on $T_w \Disk \times T_y \Pol^\times$, these are also eigenvectors and eigenvalues of $P \, \tilde{A}\adj \, \tilde{A}\, P$.
We complete this basis to an orthonormal basis for $\R^d \times (\R^d)^n$ by appending some vectors $e_{n\, (d-1) + 1}, \dotsc, e_{(n+1)\, d}$. These new vectors form an orthonormal basis for the null space of $P$, which is the image of the orthogonal projector $I_{(n+1)\,d} - P$, so we may write 
\[
P \tilde{A}^* \tilde{A} P + (I_{(n+1)\,d} - P) = \sum_{i=1}^{n \, (d-1)} \lambda_i \, e_i \, e_i^T + \sum_{i=n\, (d-1)+1}^{(n+1)\,d} \!\! e_i \, e_i^T,
\]
proving that $\det(P \tilde{A}^* \tilde{A} P + I_{(n+1)\,d} - P) = \prod_{i=1}^{n\,(d-1)} \lambda_i = \det(A^* A)$ as required. 
\end{proof}

\subsection{The projector $P$}

Our next task is to find a suitable representation for $P$. Note that $\Pol$ consists of the points in $(\R^d)^n$ where the polygon closes (so $\sum_{i=1}^n r_i y_i = 0$) and the $y_i$ have unit norm. This means that we can write $\Disk \times \Pol$ as the zero set of the map
\begin{align*}
	\varTheta \colon \Disk \times (\R^d)^n \to \R^d \times \R^n,
	\quad
	\varTheta(w,y) = \begin{pmatrix}
		\sum_{i=1}^n r_i \, y_i\\ 
		\frac{1}{2}( \nabs{y_1}^2 - 1)\\
		\vdots\\
		\frac{1}{2}( \nabs{y_n}^2 - 1)
	\end{pmatrix}
	.
\end{align*}
The derivative $B \ceq D\varTheta(w,y)$ is a linear mapping
\begin{align*}
	B \colon T_w \Disk \times T_y (\R^d)^n \cong \R^d \oplus (\R^{d})^{n} \to \R^d \times \R^n.
\end{align*}
Since $y \mapsto \sum_{i=1}^n r_i \, y_i$ is linear in the $y_i$, it is its own derivative; the derivative of $\frac{1}{2}(\nabs{y_i}^2 - 1)$ is easily computed to be $y_i^T$ (when $\nabs{y_i} = 1$). Therefore, with respect to this decomposition of vector spaces, $B$ can be written as a block matrix
\begin{align*}
	B
	=
	\begin{pmatrix}
	0      & r_1 \, I_d  & \cdots & \cdots & r_n \, I_d \\
	0      & y_1^T  & 0      & \cdots & 0 \\
	\vdots & 0      & \ddots & \ddots & \vdots \\
	\vdots & \vdots & \ddots & \ddots & 0 \\
	0      & 0      & \cdots & 0      & y_n^T    
	\end{pmatrix}
	=
	\begin{pmatrix} 0 & \varOmega \\ 0 & Y^T \end{pmatrix}
	,
\end{align*}
where $\varOmega \ceq
	\begin{pmatrix}
		r_1 \, I_d  &\cdots &r_n \, I_d
	\end{pmatrix}$
and $Y \ceq \diag(y_1, \dots, y_n)$ are matrices of size $d \times (d \,n)$ and $(n \, d) \times n$, respectively.
We claim that $B$ is surjective whenever $y \in \Pol^{\times}$. We prove this by establishing the following Lemma. 

\begin{lemma}
\label{lem:BBstar_invertible}
Let $(w,y) \in \Disk \times \Pol^{\times}$ and $B \ceq D\varTheta(w,y)$. Then the matrix $B \, B\adj$ is invertible, where $B\adj$ is the adjoint of $B$ 
with respect to the metric 
$g_1 \ceq \ninnerprod{\cdot,\cdot}_{\R^d} \times \ninnerprod{\cdot,\cdot}_{\varrho}$ 
on 
$\R^d \oplus \R^{n\,d}$
and 
with respect to the standard metric 
$g_2 \ceq \ninnerprod{\cdot,\cdot}_{\R^d} \times \ninnerprod{\cdot,\cdot}_{\R^n}$ 
on 
$\R^d \oplus \R^{n}$. 
\end{lemma}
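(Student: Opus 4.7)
The plan is to compute $BB\adj$ explicitly as a $2 \times 2$ block matrix, apply a Schur complement to reduce invertibility to a single $d \times d$ condition, and then verify positive definiteness using the hypotheses. A convenient side-effect is that the reduction produces exactly the matrix
\[
	S \ceq \sum_{i=1}^n \frac{r_i^2}{\varrho_i^2} \, \pars[\big]{I_d - y_i \, y_i\transp}
\]
that appears inside the square root of \eqref{eq:Jop_formula}, so the computation here should be reusable in the proof of \autoref{theo:FastJ}.

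First I would compute $B\adj$ from the definitions of $g_1$ and $g_2$. Because $B$ ignores its $T_w \Disk = \R^d$ summand, $B\adj$ has a vanishing top block, and the only real work is adjointing $\varOmega$ and $Y\transp$ against the weighted inner product $\ninnerprod{\cdot,\cdot}_\varrho$. Short calculations give $\varOmega\adj v = (r_i \, \varrho_i^{-2} \, v)_{i=1}^n$ and $(Y\transp)\adj s = (s_i \, \varrho_i^{-2} \, y_i)_{i=1}^n$. Multiplying out and using $\nabs{y_i}^2 = 1$ in the bottom-right block then yields
\[
	BB\adj = \begin{pmatrix}
		\pars[\big]{\sum_i r_i^2/\varrho_i^2} \, I_d & M \\
		M\transp & \diag(\varrho_1^{-2},\dotsc,\varrho_n^{-2})
	\end{pmatrix},
\]
where $M$ is the $d \times n$ matrix whose $i$-th column is $(r_i/\varrho_i^2) \, y_i$.

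Next, the bottom-right block is plainly invertible, so $BB\adj$ is invertible if and only if its Schur complement with respect to that block is. Using $M \, \diag(\varrho_1^{2},\dotsc,\varrho_n^2) \, M\transp = \sum_i (r_i^2/\varrho_i^2) \, y_i \, y_i\transp$, this Schur complement collapses to exactly the matrix $S$ displayed above.

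Finally I would verify that $S$ is positive definite. As a positive-weighted sum of orthogonal projectors onto $y_i^\perp$, $S$ is positive semidefinite, and any $v \in \ker S$ must satisfy $\nabs{v}^2 = (y_i\transp v)^2$ for every $i$, i.e., $v$ must be parallel to each $y_i$. The hypotheses now combine to exclude this: $y \in \Pol^\times$ means the $y_i$ are pairwise distinct (\autoref{def:pol times and arm times}), and the standing assumption $n \geq 3$ forces at least three distinct $y_i$ on $\Sphere$, whereas any line $\R v \subset \R^d$ meets $\Sphere$ in only the two antipodal points $\pm v$. Hence $v = 0$, so $S$ is positive definite and $BB\adj$ is invertible. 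The only mild obstacle in this plan is careful bookkeeping of the two metrics when forming the adjoint; once that is in hand the remainder is a textbook Schur complement manipulation plus the colinearity observation.
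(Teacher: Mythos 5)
Your proposal is correct and follows essentially the same route as the paper's proof: compute $B\adj$ with respect to the two metrics, write $BB\adj$ as a $2\times 2$ block matrix with invertible lower-right block $\diag(\varrho^{-2})$, reduce to the Schur complement $\gamma = \sum_i (r_i^2/\varrho_i^2)\,(I_d - y_i\,y_i\transp)$, and conclude positive definiteness from $y \in \Pol^\times$ together with $n \geq 3$ (a degenerate kernel vector would force all $y_i$ into a pair of antipodes). The paper additionally records the full UDL factorization of $BB\adj$ because it is reused later to invert $BB\adj$ when constructing the orthoprojector $P$, which you correctly anticipate as a side benefit but do not need for this lemma.
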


\begin{proof} 
Recall that we defined the metric $\ninnerprod{\cdot,\cdot}_\varrho$ in~\autoref{sec:intro}. Now the definition of adjoint is that for all $u = (u_1,u_2) \in \R^d \oplus (\R^d)^n$ and all $v = (v_1,v_2) \in \R^d \oplus \R^{n}$ we have
$
	g_2(B \,u ,v)
	=
	g_1( u, B^*\, v)
$.
We compute
\begin{align*}
	g_2(B \,u ,v)
	&=
	\ninnerprod{ \varOmega \, u_2, v_1}_{\R^d} + \ninnerprod{ Y\transp \, u_2, v_2}_{\R^n}
	=
	\ninnerprod{ u_2, \varOmega\transp \, v_1 + Y \, v_2 }_{(\R^d)^n}
	\\
	&=
	\ninnerprod{ u_2, R^{-2} \, \varOmega\transp \, v_1 + R^{-2} \, Y \, v_2 }_{\varrho}
	=
	g_1 \pars[\Big]{ (u_1,u_2), ( 0, R^{-2} \, \varOmega\transp \, v_1 + R^{-2} \, Y \, v_2 ) },
\end{align*}
where $R$ is the blockdiagonal matrix of size $(n \, d) \times (n \, d)$ with blocks $\varrho_i \, I_d$ on the main diagonal.
That means the adjoint $B\adj$ has the following block structure:
\begin{align*}
	B\adj
	=
	\begin{pmatrix}
		0 
		& 0
		\\
		R^{-2}  \varOmega\transp
		& R^{-2} \, Y
	\end{pmatrix}
	,
\end{align*}
Note that $R^{-2} \, Y = Y \, \diag(\varrho_1^{-2}, \dotsc, \varrho_n^{-2})$ and $Y\transp \, Y = \diag(\nabs{y_1}^2, \dotsc, \nabs{y_n}^2) = I_n$, because $y \in \Pol^{\times}$. Hence we may write $B \, B\adj$ as the following block matrix:
\begin{align*}
	B \, B\adj
	&=
	\begin{pmatrix}
	0 & \varOmega \\
	0 & Y\transp 
	\end{pmatrix}
	\begin{pmatrix}
	0 & 0 \\
	R^{-2} \varOmega\transp & R^{-2} Y
	\end{pmatrix}
	=
	\begin{pmatrix}
		\varOmega \, R^{-2} \varOmega\transp
		& \varOmega \, R^{-2} Y
		\\ 
		Y\transp \, R^{-2} \varOmega\transp
		& \diag(\varrho^{-2})
	\end{pmatrix}
	.
\end{align*}
Any $2 \times 2$ block matrix whose lower right block is invertible may be written in UDL form
\begin{align}
	\begin{pmatrix}
	A & B \\
	C & D 
	\end{pmatrix}
	= 
	\begin{pmatrix}
	I & BD^{-1} \\
	0 & I 
	\end{pmatrix}
	\begin{pmatrix}
	A - BD^{-1}C & 0 \\
	0 & D 
	\end{pmatrix}
	\begin{pmatrix}
	I & 0 \\
	D^{-1} C & I 
	\end{pmatrix}
	,
	\label{eq:schur formula}
	\end{align}
where the upper left block of the center matrix is the Schur complement of the lower right block of the original matrix. 
For our matrix $B \, B\adj$, this Schur complement is 
\begin{align}
	\gamma 
	&\ceq 
	\varOmega \, R^{-2} \varOmega\transp 
	- 
	\varOmega \, R^{-2} Y \,\big(\diag(\varrho^{-2})\big)^{-1} Y\transp \, R^{-2} \varOmega\transp
	\label{eq:definition of gamma}
	\\
	&=
	\varOmega \, R^{-2} \varOmega\transp 
	- 
	\varOmega \, R^{-1} Y \, Y\transp \,  R^{-1} \varOmega\transp \notag
	\\
	&=
	\varOmega \, R^{-1} \pars[\big]{ I_{n \, d} - Y \, Y\transp } \,  R^{-1} \varOmega\transp \notag
\end{align}
and we may factorize $B \, B\adj$ as follows:
\begin{align}
	B \, B\adj
	=
	\begin{pmatrix}
		I_d 
		& \varOmega \, Y
		\\ 
		0
		& I_n
	\end{pmatrix}
	\begin{pmatrix}
		\gamma
		& 0 
		\\ 
		0 
		& \diag(\varrho^{-2})
	\end{pmatrix}
	\begin{pmatrix}
		I_d 
		& 0 
		\\ 
		Y\transp \, \varOmega\transp 
		& 
		I_n
	\end{pmatrix}
	.
	\label{eq:BBTFactorization}
\end{align}
The two outer factors are triangular matrices with ones on the main diagonals. Thus they are invertible. The center matrix is block diagonal and the block $\diag(\varrho^{-2})$ is invertible by assumption. Hence $B \, B\adj$ is invertible if and only if the Schur complement $\gamma$ is invertible.

Keeping in mind that $R^{-1}$ and $I_{n \, d} - Y\, Y\transp$ are block-diagonal, and that multiplying by $\varOmega$ takes a weighted sum over rows or columns, this matrix can be written as a weighted sum of the diagonal blocks of $I_{n \, d} - Y \, Y\transp$:
\begin{align*}
	\gamma 
	= 
	\varOmega \, R^{-1} \pars[\big]{ I_{n \, d} - Y \, Y\transp } \,  R^{-1} \varOmega\transp 
	= 
	{\textstyle\sum_{i=1}^n} r_i^2 / \varrho_i^2  \, \pars[\big]{ I_d - y_i \, y_i\transp }.
\end{align*}
We see that $\gamma$ is symmetric and positive semidefinite. Assume $\gamma$ is not positive definite.
Then there is a unit vector $V \in \Sphere$ with 
\begin{align*}
	0 = \ninnerprod{V , \gamma \,V}
	=
	{\textstyle \sum_{i=1}^{n}} r_{i}^{2} / \varrho_{i}^{2} \, \pars{1 - \ninnerprod{y_{i},V}^{2} }
	.
\end{align*}
This can only happen if $y_{i} \in \set{-V,V}$ for all $i \in \set{1,\dotsc,n}$. Since we require $n \geq 3$, this implies that there must be at least one pair of indices $i \neq j$ with $y_{i}=y_{j}$. But this contradicts the condition $y \in \Pol^{\times}$. We note that this is why we introduced $\Pol^\times$ in the first place. 

Hence $\gamma$ must be positive definite, showing that $B\,B\adj$ is invertible and that $B\adj$ is surjective.
\end{proof}

As a side effect, by the implicit function theorem (or transversality of $\varTheta$ to $0$) we have shown the following:
\begin{lemma} 
The set $\Disk \times \Pol^{\times}$ is a smooth manifold with tangent space $T_{(w,y)}(\Disk \times \Pol^{\times}) = \ker(B)$.
\label{lem:poltimes is a mfld}
\end{lemma}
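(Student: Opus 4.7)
The plan is to apply the regular value (submersion) theorem directly to the map $\varTheta$ constructed just above. Recall that by construction $\Disk \times \Pol$ is exactly the preimage $\varTheta^{-1}(0)$ inside the open set $\Disk \times (\R^d)^n$, since the first block of $\varTheta$ encodes closure $\sum_{i=1}^n r_i y_i = 0$ and the remaining coordinates encode $\nabs{y_i} = 1$. The subset $\Disk \times \Pol^\times$ is obtained by further intersecting with the open set $\Disk \times \Arm^\times$.

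First I would observe that the preceding \autoref{lem:BBstar_invertible} has done the main work: it shows that at every point $(w,y) \in \Disk \times \Pol^\times$ the matrix $B \, B\adj$ is invertible, which is equivalent to $B = D\varTheta(w,y)$ being surjective. Hence $\varTheta$ is a submersion at every point of $\Disk \times \Pol^\times$. By the constant rank / regular value theorem, the intersection of $\varTheta^{-1}(0)$ with the open set on which $B$ is surjective is a smooth embedded submanifold of $\Disk \times (\R^d)^n$.

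Next I would identify that open set with $\Disk \times \Pol^\times$. On one side, we know $B$ is surjective on all of $\Disk \times \Pol^\times$ by the previous lemma. On the other side, $\Pol^\times$ is an open subset of $\Pol$ as noted following \autoref{def:pol times and arm times}, so $\Disk \times \Pol^\times$ is itself open in $\Disk \times \Pol = \varTheta^{-1}(0)$. Combining these two observations shows that $\Disk \times \Pol^\times$ is a smooth submanifold of $\Disk \times (\R^d)^n$.

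Finally, the tangent space description is standard: for a smooth map that is a submersion at a point of its zero set, the tangent space to the zero-level set coincides with the kernel of the differential. Applying this to $\varTheta$ at $(w,y) \in \Disk \times \Pol^\times$ yields $T_{(w,y)}(\Disk \times \Pol^\times) = \ker D\varTheta(w,y) = \ker(B)$. There is no real obstacle here since the surjectivity, which is the only nontrivial hypothesis of the implicit function theorem, is precisely what was established in \autoref{lem:BBstar_invertible}; this lemma is essentially a bookkeeping corollary.
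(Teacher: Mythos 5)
Your proposal is correct and takes exactly the approach the paper intends: the paper presents this lemma as an immediate consequence of \autoref{lem:BBstar_invertible} via the implicit function theorem (``transversality of $\varTheta$ to $0$''), and your writeup simply spells out that standard argument—surjectivity of $B = D\varTheta$ on the open set $\Disk \times \Pol^\times$ makes $\varTheta^{-1}(0)$ a submanifold there with tangent space $\ker(B)$. No discrepancy with the paper's reasoning.
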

We are now in a position to accomplish the main goal of this section:
\begin{proposition}
The orthoprojector $P$ onto $T_{(w,y)}(\Disk \times \Pol)$
with respect to the scaled metric $\ninnerprod{\cdot,\cdot}_{\R^d} \times \ninnerprod{\cdot,\cdot}_{\varrho}$ 
is given by
\begin{align}
	P
	&=
	\begin{pmatrix}
		I_d
		& 0
		\\
		0
		&E
	\end{pmatrix}
	,
	\quad \text{where} \quad
	E \ceq Q - R^{-2} Q \, \varOmega\transp \,\gamma^{-1} \varOmega \, Q, 
	\quad
	\text{and}
	\quad
	 Q \ceq I_{n \, d} - Y \, Y\transp.
	\label{eq:P}
\end{align}
Further, $E$ is the orthogonal projector with respect to $\ninnerprod{\cdot,\cdot}_{\varrho}$ from $(\R^d)^n$ onto $T_y \Pol^\times$.
\end{proposition}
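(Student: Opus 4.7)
The plan is to first establish the block-diagonal structure of $P$ from the shape of $B$ together with the product form of the metric $g_1$, and then derive the explicit formula for $E$ by projecting in two stages onto the intersection $T_y \Pol^\times = \ker Y\transp \cap \ker \varOmega$.

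First, \autoref{lem:poltimes is a mfld} gives $T_{(w,y)}(\Disk \times \Pol^\times) = \ker B$. Since the first column block of $B$ vanishes, $\ker B$ splits as $\R^d \oplus \ker \tilde B$ with $\tilde B \ceq \begin{pmatrix} \varOmega \\ Y\transp \end{pmatrix}$. Because $g_1$ is a product inner product between the $\R^d$ and $(\R^d)^n$ factors, the orthoprojector decomposes as $P = \begin{pmatrix} I_d & 0 \\ 0 & E \end{pmatrix}$, where $E$ is the $\ninnerprod{\cdot,\cdot}_\varrho$-orthoprojector from $(\R^d)^n$ onto $\ker \tilde B = \ker Y\transp \cap \ker \varOmega = T_y \Pol^\times$.

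For the explicit formula I would use the general principle that the orthoprojector onto an intersection $W_1 \cap W_2$ equals the composition of the orthoprojector onto $W_1$ with the orthoprojector onto $W_1 \cap W_2$ computed within $W_1$ using the restricted inner product. Take $W_1 \ceq \ker Y\transp = T_y \Arm^\times$ and $W_2 \ceq \ker \varOmega$. The projector onto $W_1$ is $Q = I - Y Y\transp$: each of its block-diagonal blocks $I_d - y_i y_i\transp$ is the standard Euclidean orthoprojector onto $T_{y_i}\Sphere$ (idempotence uses $Y\transp Y = I_n$ on $\Pol^\times$), and since the $\varrho$-metric is just a positive rescaling of the Euclidean inner product on each block, $Q$ is also orthogonal with respect to $\ninnerprod{\cdot,\cdot}_\varrho$. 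For the second stage, I apply the formula $I - L\adj(L L\adj)^{-1} L$ with $L \ceq \varOmega|_{W_1}$. A direct calculation shows the $\varrho$-adjoint of $\varOmega$ is $R^{-2}\varOmega\transp$, hence $L\adj = Q R^{-2} \varOmega\transp$ and $L L\adj = \varOmega R^{-1}(I_{n\,d} - Y Y\transp) R^{-1} \varOmega\transp = \gamma$; this reuses the manipulation carried out in the proof of \autoref{lem:BBstar_invertible}, which also supplies the invertibility of $\gamma$. Composing with $Q$ and using that the block-diagonal matrices $Q$ and $R^{-2}$ commute gives
\begin{align*}
    E = Q - Q R^{-2} \varOmega\transp \gamma^{-1} \varOmega Q = Q - R^{-2} Q \varOmega\transp \gamma^{-1} \varOmega Q,
\end{align*}
which matches~\eqref{eq:P}.

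The main obstacle will be the careful bookkeeping of adjoints with respect to several different metrics: the Euclidean metric on $\R^d$ where the outputs of $\varOmega$ and the scalar components of $Y\transp$ live, versus the rescaled $\varrho$-metric on $(\R^d)^n$ and its restriction to $T_y \Arm^\times$. Once those formalities are in place, the identification $L L\adj = \gamma$ reduces to the Schur complement calculation already carried out in \autoref{lem:BBstar_invertible}, so no new heavy computation is required.
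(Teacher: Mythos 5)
Your proof is correct, and it takes a genuinely different route from the paper. The paper plugs $B\adj$ and the UDL factorization of $BB\adj$ from the preceding lemma into the standard formula $P = I_{(n+1)d} - B\adj(BB\adj)^{-1}B$, then extracts the $E$-block by block algebra, and deduces afterwards that $E$ is an orthoprojector from $P\adj = P$, $PP = P$. You instead first observe that $\ker B = \R^d \oplus \ker \tilde B$ and that the product metric forces $P$ to split as $I_d \oplus E$, and then construct $E$ directly by a two-stage projection: project onto $W_1 = \ker Y\transp$ (trivially $Q$, since it is block-diagonal over spheres), then project within $W_1$ onto $\ker(\varOmega|_{W_1})$ via $I - L\adj(LL\adj)^{-1}L$. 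Your identification $LL\adj = \gamma$ reuses exactly the Schur-complement computation the paper already did. The payoff of the paper's route is that it leans entirely on a computation already on the page; the payoff of yours is that it isolates the role of each constraint (sphere tangency vs. closure), gives the ``orthoprojector onto $T_y\Pol^\times$'' claim for free from the construction rather than a posteriori, and makes the final form of $E$ transparent. One small suggestion: the two-stage projection principle you invoke — that the orthoprojector onto $W_1 \cap W_2$ equals the within-$W_1$ orthoprojector onto $W_1 \cap W_2$ composed with the orthoprojector onto $W_1$ — is true but not entirely obvious; a one-line justification (e.g.\ that $(W_1\cap W_2)^\perp = W_1^\perp \oplus\big((W_1\cap W_2)^\perp \cap W_1\big)$) would make the argument self-contained.
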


\begin{proof} Since the tangent space is the kernel of $B$ and $B$ is surjective, it follows that
\begin{align*}
	P = I_{(n+1) \, d} - B\adj \, (B \, B\adj)^{-1} \, B.
\end{align*}
Inverting $B \, B\adj$ is easy with the factorization from \eqref{eq:BBTFactorization}:
\begin{align*}
	(B \, B\adj)^{-1}
	=
	\begin{pmatrix}
		I_d 
		& 0 
		\\ 
		-Y\transp \, \varOmega\transp 
		& I_n
	\end{pmatrix}
	\begin{pmatrix}
		\gamma^{-1} 
		& 0 
		\\ 
		0 
		& \diag(\varrho^2)
	\end{pmatrix}
	\begin{pmatrix}
		I_d 
		& -\varOmega \, Y 
		\\ 
		0 
		& 
		I_n
	\end{pmatrix}	
	.
\end{align*}
It is only a matter of some algebra to obtain the expression for $P$ above.
Since $P$ is an orthogonal projector, it has to satisfy $P\adj = P$ and $P \, P = P$. For $E$ this implies
\begin{align}
	E\adj = E \qand E \, E = E,
	\label{eq:E_ortho}
\end{align}
where $E\adj$ denotes the adjoint with respect to the metric $\ninnerprod{\cdot,\cdot}_\varrho$. Thus, $E$ is also an orthogonal projector.
\end{proof}

\subsection{Determinant of $P \, \tilde{A}^* \,  \tilde{A} \, P + I_{(n+1) \, d} - P$}
Recall that $\tilde{c}_{i} \ceq \tilde{b}_{i}^{-1} \, \tilde{a}_{i}$ where $\tilde{a}_i$ and $\tilde{b}_i$ were defined in~\eqref{eq:hatai and hatbi}, and $\tilde{c}_{i}$ was defined in~\eqref{eq:hatci}. To compute the determinant of $P \, \tilde{A}^* \,  \tilde{A} \, P + I_{(n+1) \, d} - P$, we will eventually need more information about $\tilde{c}_i$. So we start by deriving an explicit formula. 
\begin{proposition}
When $y \in \Arm$, each $\tilde{c}_i$ is in the form 
\begin{align}
\tilde{c}_i
	=
	\frac{2}{1-\nabs{w}^{2}}
	\,
	\pars*{
		(1 + \ninnerprod{w,y_i}) \, I_{d} - (y_i + w) \,y_i\transp 
	}
	.
\label{eq:hatci formula}
\end{align}
and further, 
\begin{align}
	\pars[\big]{I_d - y_i \, y_i\transp}\, \tilde{c}_i = \tilde{c}_i.
\label{eq:qc equals c}
\end{align}
\end{proposition}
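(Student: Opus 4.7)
The strategy is to exploit the inverse relation $\hShift(w, \hShift(-w, y_i)) = y_i$ in order to avoid directly inverting $\tilde{b}_i$. Differentiating this identity with respect to $w$ at fixed $y_i$, and using $D_2 \hShift(w, x_i) = \tilde{b}_i^{-1}$ (because $\hShift(w,\cdot)$ and $\hShift(-w,\cdot)$ are inverse M\"obius transformations), we obtain
\[
\tilde{c}_i = \tilde{b}_i^{-1}\tilde{a}_i = -D_1 \hShift(w, x_i).
\]
This reduces everything to a single derivative evaluation at $(w, x_i)$.

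Differentiating the explicit quotient formula for $\hShift(w,z)$ with respect to $w$, evaluating at $z = x_i$ with $\nabs{x_i}=1$, and using $\hShift(w, x_i) = y_i$ gives
\[
\tilde{c}_i h = \frac{2}{\nabs{w-x_i}^2}\bigl[\ninnerprod{w, h}\, x_i - \ninnerprod{x_i, h}\, w + (1 - \ninnerprod{w, x_i})\, h + \ninnerprod{w-x_i, h}\, y_i\bigr].
\]
To rewrite this in terms of $w$ and $y_i$ only, I would use two ingredients: the explicit formula $x_i = \bigl((1-\nabs{w}^2)y_i + 2(1+\ninnerprod{w, y_i})w\bigr)/\nabs{w+y_i}^2$ from~\eqref{eq:shift}, and the identity
\[
\nabs{w-x_i}^2 \, \nabs{w+y_i}^2 = (1-\nabs{w}^2)^2,
\]
which follows because $\hShift(w,\cdot)$ at $x_i$ and $\hShift(-w,\cdot)$ at $y_i$ are inverse conformal maps, so the product of their conformal factors (cf.\ \autoref{lem:detSS}) equals $1$. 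Expanding the four terms and collecting, the contributions proportional to $w/(1-\nabs{w}^2)^2$ cancel, leaving exactly~\eqref{eq:hatci formula}.

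For the second identity~\eqref{eq:qc equals c}, compute $y_i\transp \tilde{c}_i$ directly from~\eqref{eq:hatci formula}:
\[
y_i\transp \tilde{c}_i = \frac{2}{1-\nabs{w}^2}\bigl[(1+\ninnerprod{w, y_i})\, y_i\transp - (\nabs{y_i}^2 + \ninnerprod{w, y_i})\, y_i\transp\bigr] = 0,
\]
since $\nabs{y_i} = 1$. Hence $(I_d - y_i y_i\transp)\tilde{c}_i = \tilde{c}_i - y_i\, (y_i\transp \tilde{c}_i) = \tilde{c}_i$. Geometrically, this just reflects the fact that $\tilde{c}_i$ takes values in $T_{y_i}\Sphere = y_i^\perp$, which one could also argue a priori from $\tilde{c}_i = \tilde{b}_i^{-1}\tilde{a}_i$, noting that $\tilde{a}_i h \in T_{x_i}\Sphere$ and $\tilde{b}_i^{-1}$ preserves tangency to the sphere.

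The main obstacle is the algebraic simplification in the middle step. While routine, it requires careful bookkeeping of cancellations between terms of different orders in $(1-\nabs{w}^2)$; along the way one uses auxiliary identities such as $1 - \ninnerprod{w, x_i} = (1-\nabs{w}^2)(1+\ninnerprod{w, y_i})/\nabs{w+y_i}^2$ and $\ninnerprod{w - x_i, h} = -(1-\nabs{w}^2)\ninnerprod{w+y_i, h}/\nabs{w+y_i}^2$ to reduce each piece to the desired form.
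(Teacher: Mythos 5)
Your proposal is correct, and it takes a genuinely different route from the paper's. The paper writes down $\tilde a_i = -D_1\hShift(-w,y_i)$ and $\tilde b_i = D_2\hShift(-w,y_i)$ explicitly, then verifies $\tilde b_i \tilde c_i = \tilde a_i$ by choosing an orthonormal basis adapted to the plane through $0$, $w$, $y_i$, observing that all three matrices are block-diagonal in that basis, and reducing the check to an explicit $2\times 2$ computation with $w = r\cos\theta\, e_1 + r\sin\theta\, e_2$. You instead differentiate the inverse-map identity $\hShift(w,\hShift(-w,y_i)) = y_i$ in $w$, which (together with $D_2\hShift(w,x_i) = \tilde b_i^{-1}$ from differentiating the same identity in the second slot) yields the clean reduction $\tilde c_i = \tilde b_i^{-1}\tilde a_i = -D_1\hShift(w,x_i)$, so no matrix inversion is ever performed. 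The remaining work is to evaluate $D_1\hShift$ at $(w,x_i)$ and eliminate $x_i$ in favor of $(w,y_i)$ via the explicit shift formula and the conformal-factor identity $\nabs{w-x_i}^2\,\nabs{w+y_i}^2 = (1-\nabs{w}^2)^2$; I verified that the stated auxiliary identities are correct and that the $w$-terms of order $(1-\nabs{w}^2)^{-2}$ indeed cancel, leaving exactly~\eqref{eq:hatci formula}. Both routes involve comparable bookkeeping, but yours buys conceptual transparency by trading the ad-hoc basis reduction for the structural inverse-function identity; the a priori observation that $\tilde c_i$ must land in $T_{y_i}\Sphere$ (since $\tilde a_i$ lands in $T_{x_i}\Sphere$ and $\tilde b_i^{-1}$ carries $T_{x_i}\Sphere$ to $T_{y_i}\Sphere$) is also a nice conceptual shortcut to~\eqref{eq:qc equals c} that the paper does not mention.
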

\begin{proof}
We start by recalling that 
\begin{align*}
	\hShift(w,z) 
	\ceq
	\frac{(1- \nabs{w}^2) \, z - (1 + \nabs{z}^2 - 2 \, \ninnerprod{w,z}) \, w}{1 - 2 \ninnerprod{w,z} + \nabs{w}^2 \, \nabs{z}^2}.
\end{align*}
while $\tilde{a}_i = -D_1 \hShift(-w,y_i)$ and $\tilde{b}_i = D_2 \hShift(-w,y_i)$ (see \eqref{eq:hatai and hatbi}). Mechanically differentiating the formula above using the identity $(u/v)' = (1/v)\,(u' - (u/v) \, v')$, one eventually gets
\begin{align*}
	D_1 \hShift(s,z) 
	&=
	\frac{1}{1- 2 \, \ninnerprod{s,z} + \nabs{s}^2 \, \nabs{z}^2}
	\pars*{
		2 \, s \, z\transp - 2\,z \, s\transp - (1 + \nabs{z}^2 - 2 \, \ninnerprod{s,z}) \, I_{d}
		+
		2 \, \hShift(s,z) \, (z\transp - \nabs{z}^2 \, s\transp)
	}
	\\
	D_2
	\hShift(s,z) 
	&=
	\frac{1}{1- 2 \, \ninnerprod{s,z} + \nabs{s}^2 \, \nabs{z}^2}
	\,
	\pars[\Big]{
		(1- \nabs{s}^2) \, I_{d} - 2 \, s \, (z\transp - s\transp)
		+
		2 \, \hShift(s,z)
		\,
		(s\transp - \nabs{s}^2 \, z\transp)
	}
	.
\end{align*}
For the rest of the proof, we will suppress the index on $y_i$ for clarity, writing $y$ instead. Substituting $z = y$ with $\nabs{y} = 1$, this simplifies to
\begin{align*}
	D_1
	\hShift(s,y) 
	&=
	\frac{2}{1- 2 \, \ninnerprod{s,y} + \nabs{s}^2}
	\pars*{
		s \, y\transp - y \, s\transp - (1 - \ninnerprod{s,y}) \, I_{d}
		+
		\hShift(s,y) \, (y\transp - s\transp)
	}
	\\
	D_2
	\hShift(s,y) 
	&=
	\frac{2}{1- 2 \, \ninnerprod{s,y} + \nabs{s}^2}
	\,
	\pars*{
		\frac{1- \nabs{s}^2}{2} \, I_{d} - s \, (y\transp - s\transp)
		+
		\hShift(s,y)
		\,
		(s\transp - \nabs{s}^2 \, y\transp)
	}
	.
\end{align*}
With the abbreviation $x \ceq \hShift(-w,y)$, we obtain
\begin{align*}
	a \ceq -D_1 \hShift(-w,y) 
	&=
	\frac{2}{1 + 2 \, \ninnerprod{w,y} + \nabs{w}^2}
	\pars*{
		w \, y\transp - y \, w\transp + (1 + \ninnerprod{w,y}) \, I_{d}
		-
		x \, (y\transp + w\transp)
	}
	\\
	b \ceq
	D_2 
	\hShift(-w,y) 
	&=
	\frac{2}{1 + 2 \, \ninnerprod{w,y} + \nabs{w}^2}
	\,
	\pars*{
		\frac{1- \nabs{w}^2}{2} \, I_{d} + w (y\transp + w\transp)
		-
		x
		\,
		(w\transp + \nabs{w}^2 \, y\transp)
	}
	.
\end{align*}
We claim that when $\nabs{y} = 1$, we have $b^{-1} a = c$, where 
\begin{align*}
	c
	\ceq
	\frac{2}{1-\nabs{w}^{2}}
	\,
	\pars*{
		(1 + \ninnerprod{w,y}) \, I_{d} - (y+w)\,y\transp
	}
	.
\end{align*}
Observe that $x = \hShift(-w,y)$ is in the plane of $w$, $y$, and $0$. Therefore, from the formulas above, we see that $a$, $b$, and $c$ map this plane to itself. In fact, $a$, $b$, and $c$ also preserve the orthogonal complement of this plane. If we let $e_1 = y$ and $e_2$ be a unit vector in this plane perpendicular to~$y$, we may complete this to an orthonormal basis for $\R^d$. In this basis, the matrices $a$, $b$ and $c$ are block-diagonal with upper left $2 \times 2$ blocks and lower right $(d-2) \times (d-2)$ blocks. 

For convenience, we define $\alpha = 2/(1 + 2 \, \ninnerprod{w,y} + \nabs{w}^2)$. Then we compute
\begin{align*}
	a = \begin{pmatrix} 
		a_{11} & a_{12} & 0 \\
		a_{21} & a_{22} & 0 \\
		0 & 0 & \alpha (1 + \ninnerprod{w,y}) I_{d-2}
	\end{pmatrix}
	,\;
		b = \begin{pmatrix}
		b_{11} & b_{12} & 0 \\
		b_{21} & b_{22} & 0 \\
		0 & 0 & \alpha \frac{1 - \nabs{w}^2}{2} I_{d-2}
	\end{pmatrix}
	,\;
	c = \begin{pmatrix}
		c_{11} & c_{12} & 0 \\
		c_{21} & c_{22} & 0 \\
		0 & 0 & \frac{2 (1 + \ninnerprod{w,y})}{1 - \nabs{w}^2} I_{d-2}
	\end{pmatrix}.
\end{align*}
It suffices to show that $b\, c = a$, which we may do block-by-block. It is already clear that the product of the lower right blocks of $b$ and $c$ is equal to the corresponding block of $a$. We are left checking the upper left ($2 \times 2$) blocks. If $w = r \cos(\theta) \, e_1 + r \sin(\theta) \, e_2$, the upper left blocks of $a$, $b$, and $c$ are:
\begin{align*}
	\begin{pmatrix} 
	a_{11} & a_{12} 
	\\
	a_{21} & a_{22} 
	\end{pmatrix} 
	&= 
	\begin{pmatrix}
		\frac{4 r^2 \sin^2(\theta ) \, (r \cos (\theta )+1)}{ \pars{ r^2+2 r \cos (\theta )+1 }^2 } 
		& 
		-\frac{4 \, r \,  \sin (\theta ) \, \pars{ r \cos (\theta )+1 }^2}{ \pars{r^2 + 2 \, r \cos (\theta ) + 1 }^2 } 
		\\
		-\frac{2 \, r \, \sin (\theta ) \, \pars{r^2 \cos (2\,\theta )+2\,r \cos (\theta )+1}}{ \pars{ r^2+2\,r \cos (\theta)+1 }^2} 
		& 
		\frac{2 \, \pars{r \, \cos (\theta )+1} \, \pars{ r^2 \cos (2\,\theta )+2\,r \cos (\theta)+1 }}{\pars{ r^2+2\,r \cos (\theta )+1 }^2}
	\end{pmatrix}
	, 
	\\
	\begin{pmatrix} 
	b_{11} & b_{12} 
	\\
	b_{21} & b_{22} 
	\end{pmatrix} 
	&= 
	\begin{pmatrix}
		-\frac{\pars{r^2-1} \, \pars{r^2 \cos (2\,\theta )+ 2 \, r \cos (\theta )+1}}{\pars{r^2 + 2 \, r \cos (\theta ) + 1}^2} 
		& 
		\frac{2 \, r \, \pars{r^2-1} \, \sin (\theta ) \, (r \cos (\theta )+1)}{\pars{r^2 + 2 \, r \cos (\theta )+1}^2} 
		\\
		-\frac{2 \,r \, \pars{r^2-1} \, \sin (\theta ) \, (r \cos (\theta )+1)}{\pars{r^2 + 2 \, r \cos (\theta )+1}^2} 
		&
		-\frac{\pars{r^2-1} \,\pars{r^2 \cos (2\,\theta ) + 2 \, r \cos (\theta )+1}}{\pars{r^2 + 2 \, r \cos (\theta ) + 1}^2} 
	\end{pmatrix}
	,
	\\
	\begin{pmatrix} 
		c_{11} & c_{12} 
		\\
		c_{21} & c_{22} 
	\end{pmatrix} 
	&= 
	\begin{pmatrix}
		0 & 0 \vphantom{\frac{4 r^2 \sin ^2(\theta ) (r \cos (\theta )+1)}{ \pars{ r^2+2 r \cos (\theta )+1 }^2 } }
		\\
		\frac{2 r \sin (\theta )}{r^2-1} & - \frac{2 (r \cos (\theta )+1)}{r^2-1}
	\end{pmatrix}
	.
\end{align*}
Now it is easy to check that the product of the upper left blocks of $b$ and $c$ is equal to the corresponding block of $a$,
as required for~\eqref{eq:hatci formula}. To prove~\eqref{eq:qc equals c}, we just observe that in our basis,
\[
	I_d - yy\transp 
	= 
	\begin{pmatrix}
		0 & 0 & 0 \\
		0 & 1 & 0 \\
		0 & 0 & I_{d-2}
	\end{pmatrix}
	.
	\qedhere
\]
\end{proof}

We are now ready to work on the main goal of this section:
\begin{lemma}\label{lem:detPAAP} For $(w,y) \in \Disk \times \Pol^\times$, we have
\begin{align*}
	\det( P \, \tilde{A}\adj \, \tilde{A}\, P + I_{(n+1)\,d} - P ) 
	=
	\pars*{\frac{2}{1-\nabs{w}^{2}}}^{2 \, d}
	\,
	\frac{
		\det \pars[\Big]{ 
			\sum_{i=1}^{n} r_{i} \, \pars[\big]{I_{d} - y_{i}\,y_{i}\transp} 
		}^2
	}{
		\det \pars[\Big]{
			\sum_{i=1}^n r_i^2 / \varrho_i^2 \, \pars[\big]{I_d - y_i \, y_i\transp }
		}
	}
	\neq 0.
\end{align*}
\end{lemma}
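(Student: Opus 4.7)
\medskip

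\noindent\textbf{Proof plan.}
The plan is to compute $\tilde A\adj \tilde A$ explicitly in block form, sandwich it with $P$ from~\eqref{eq:P}, and then collapse the big determinant to a $d\times d$ one via a Schur complement. First I would compute $\tilde A\adj$ with respect to the correct metrics. Since the domain carries $g_{1}=\ninnerprod{\cdot,\cdot}_{\R^{d}}\times\ninnerprod{\cdot,\cdot}_{\varrho}$ and the codomain carries $\ninnerprod{\cdot,\cdot}_{\varrho}$, a short calculation yields
\[
  \tilde A\adj \tilde A
  =
  \begin{pmatrix} D\transp R^{2} D & D\transp R^{2} \\ D & I_{n\,d}\end{pmatrix}
  ,
  \qquad
  D \ceq \vect(\tilde c)
  ,
\]
where $R=\diag(\varrho_{1} I_{d},\dotsc,\varrho_{n} I_{d})$ and the $(1,1)$-block equals $\sum_{i}\varrho_{i}^{2}\tilde c_{i}\transp \tilde c_{i}$.

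\smallskip
\noindent Next, I would conjugate by $P=\diag(I_{d},E)$. Using $E^{2}=E$ from~\eqref{eq:E_ortho}, the lower-right block of $P\,\tilde A\adj \tilde A\,P$ is simply $E$, and adding the complementary block $I_{n\,d}-E$ from $I_{(n+1)d}-P$ restores the identity in the lower right. Hence
\[
  P\,\tilde A\adj \tilde A\,P + I_{(n+1)d}-P
  =
  \begin{pmatrix} D\transp R^{2} D & D\transp R^{2}\,E \\ E\,D & I_{n\,d}\end{pmatrix}
  ,
\]
and the Schur complement with respect to the invertible lower-right block gives
\[
  \det\pars[\big]{P\,\tilde A\adj \tilde A\,P + I_{(n+1)d}-P}
  =
  \det\pars[\big]{D\transp R^{2}(I_{n\,d}-E)\,D}
  .
\]

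\smallskip
\noindent The crucial simplification is to recognise $I_{n\,d}-E = Y\,Y\transp + R^{-2}\,Q\,\varOmega\transp\gamma^{-1}\varOmega\,Q$ from~\eqref{eq:P}, then exploit two block-wise identities. Equation~\eqref{eq:qc equals c} says $Q\,D=D$, and the formula~\eqref{eq:hatci formula} directly implies $y_{i}\transp \tilde c_{i}=0$, i.e.\ $Y\transp D = 0$. The first term $D\transp R^{2} Y\,Y\transp D$ therefore vanishes, and
\[
  \det\pars[\big]{D\transp R^{2}(I_{n\,d}-E)\,D}
  =
  \det\pars[\big]{(\varOmega D)\transp \gamma^{-1}(\varOmega D)}
  =
  \frac{\det(H)^{2}}{\det(\gamma)}
  ,
  \qquad
  H\ceq \varOmega D = {\textstyle\sum_{i}} r_{i}\,\tilde c_{i}
  .
\]

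\smallskip
\noindent It remains to evaluate $H$. Plugging the explicit formula~\eqref{eq:hatci formula} for $\tilde c_{i}$ and using the closure condition $\sum_{i} r_{i}\,y_{i}=0$ (which kills both the $\sum_{i} r_{i}\ninnerprod{w,y_{i}} I_{d}$ and the $w\,(\sum_{i} r_{i} y_{i})\transp$ terms) collapses $H$ to
\[
  H = \frac{2}{1-\nabs{w}^{2}}\,{\textstyle\sum_{i=1}^{n}} r_{i}\,\pars[\big]{I_{d}-y_{i}\,y_{i}\transp}
  .
\]
Substituting this, together with the form of $\gamma$ from~\eqref{eq:definition of gamma}, yields the claimed expression. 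Nonvanishing of the determinant is inherited from nonvanishing of $\gamma$ (shown in \autoref{lem:BBstar_invertible}) and the fact that $\sum_{i}r_{i}(I_{d}-y_{i}y_{i}\transp)$ is positive definite by the same stability argument applied to $r$ in place of $r^{2}/\varrho^{2}$. The main technical nuisance will be keeping the two different inner products straight while computing $\tilde A\adj$; everything downstream is block-matrix algebra driven by the two identities $Q\,D=D$ and $Y\transp D=0$.
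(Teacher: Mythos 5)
Your proposal is correct and follows essentially the same route as the paper: form the $2\times 2$ block matrix, take a Schur complement to reduce to a $d\times d$ determinant, use the identity $(I_d - y_i y_i\transp)\,\tilde{c}_i = \tilde{c}_i$ (equivalently $y_i\transp\tilde c_i = 0$, as you note) to collapse $C\adj C - C\adj E C$ to $(\varOmega C)\transp\gamma^{-1}(\varOmega C)$, and then evaluate $\varOmega C = \sum_i r_i\tilde c_i$ via the closure condition. The only cosmetic difference is that you compute $\tilde A\adj\tilde A$ before conjugating by $P$, whereas the paper forms $\tilde A P$ first; the resulting block matrix and cancellation are identical.
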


\begin{proof}
We start by recalling from~\eqref{eq:hatci} that
\begin{align*}
\tilde{A} \ceq
	\tilde{Z}^{-1} \, D\hop(w,y)
	=
	\begin{pmatrix}
		C
		&
		I_{n \, d}
	\end{pmatrix}
	,\quad \text{where} \quad
	C \ceq \vect(\tilde{c}).
\end{align*}
Thus, we obtain
\begin{align*}
	\tilde{A} \, P
	=
	\begin{pmatrix}
		C
		& 
		E
	\end{pmatrix}
	\qand
	(\tilde{A} \, P)\adj
	=
	P \, \tilde{A}\adj
	=
	\begin{pmatrix}
		C\adj
		\\
		E\adj
	\end{pmatrix}
	.
\end{align*}
This, combined with \eqref{eq:E_ortho} allows us to compute
\begin{align*}
	P \, \tilde{A}\adj \, \tilde{A} \, P + I_{(n+1) \, d} - P
	=
	\begin{pmatrix}
		C\adj C 
		&
		C\adj \,E
		\\
		E\adj \, C
		&
		I_{n \, d} - E + E\adj \,E
	\end{pmatrix}
	=
	\begin{pmatrix}
		C\adj C 
		&
		C\adj \,E
		\\
		E\adj \, C
		&
		I_{n \, d }
	\end{pmatrix}
	.
\end{align*}
The determinant can be computed by Schur's formula~\eqref{eq:schur formula}; utilizing \eqref{eq:E_ortho} once more, we obtain:
\begin{align*}
	\det
	\begin{pmatrix}
		C\adj C 
		&
		C\adj \,E
		\\
		E\adj \, C
		&
		I_{n \, d}
	\end{pmatrix}
	=
	\det \pars[\big]{ C\adj C - C\adj \, E\,  I_{n \, d}^{-1} \, E\adj \, C } \cdot \det \pars[\big]{ I_{n \, d} }
	=
	\det \pars[\big]{ C\adj C - C\adj \, E\, C }
	.
\end{align*}
Now $Q = \diag \pars[\big]{I_d - y_1 \, y_1\transp\, \dotsc, I_d - y_n \, y_n\transp}$ while $C = \vect(\tilde{c}_1, \dotsc, \tilde{c}_n)$, so~\eqref{eq:qc equals c} implies that $Q\, C = C$. Since $Q$ is symmetric, this implies $C\transp Q = C\transp$. Now $C$ is a map from $\R^d$ with the standard inner product to $(\R^d)^n$ with the inner product  $\ninnerprod{u,v}_\varrho = \sum_i \varrho_{i}^2 \, \ninnerprod{u_i,v_i}_{\R^d}$ (from~\autoref{sec:opening and closing polygons}). If $u \in \R^d$ and $v \in (\R^d)^n$, the adjoint $C\adj$ is defined by 
\begin{align*}
	\ninnerprod{C\adj v,u}_{\R^d} = \ninnerprod{v,Cu}_{\varrho} 
	= 
	\sum_{i=1}^n \varrho_i^2 \, \ninnerprod{v_i,\tilde{c}_i \,u_i}_{\R^d} 
	= 
	\sum_{i=1}^n \ninnerprod{(\tilde{c}_i\transp \, \varrho_i^2) \,v_i, u_i}_{\R^d} 
	= 
	\ninnerprod{C\transp \, R^2 \, v,u}_{\R^d}.
\end{align*}
It follows, substituting the definition of $E$, that
\begin{align*}
	C\adj \, C  - C\adj \, E\, C
	&=
	\cancel{C\adj \, C} - \cancel{C\adj \, Q\, C} + C\adj \, R^{-2}  Q \, \varOmega\transp \,\gamma^{-1} \varOmega \, Q \, C \\
	&= 
	C\transp \, \cancel{R^2 \, R^{-2}} Q \, \varOmega\transp \,\gamma^{-1} \,\varOmega \, Q \, C \\
	&=
	C\transp \, \cancel{Q} \, \varOmega\transp \,\gamma^{-1} \varOmega \, \cancel{Q} \, C
	=
	C\transp\, \varOmega\transp \,\gamma^{-1} \varOmega \, C.
\end{align*}
Since $C\transp\, \varOmega\transp$, $\gamma^{-1}$ (see~\eqref{eq:definition of gamma}), and $\varOmega \, C$ are $d \times d$ matrices, we have
\begin{align*}
	\det(C\adj \, C  - C\adj \, E\, C)
	=
	\det \pars[\big]{ C\transp\, \varOmega\transp \,\gamma^{-1} \,\varOmega \, C }
	=
	\det \pars[\big]{ C\transp\, \varOmega\transp} \,\det\pars[\big]{\gamma^{-1}} \, \det \pars[\big]{\varOmega \, C}
	=
	\frac{\det(\varOmega \, C)^2}{\det(\gamma)}.
\end{align*}
Finally, we use~\eqref{eq:hatci formula} and the fact that $y$ is a closed polygon (so $\sum_{i=1}^n r_i \, y_i = 0$) to compute
\begin{align*}
	\varOmega \, C 
	=
	\sum_{i=1}^{n} r_{i} \,\tilde{c}_{i} 
	= 
	\frac{2}{1-\nabs{w}^{2}} \, \pars*{ \sum_{i=1}^{n} r_{i} \, \pars[\big]{I_{d} - y_{i}\,y_{i}\transp}
	-
    \pars[\Big]{ \innerprod{w,\textstyle \cancel{\sum_{i=1}^{n} r_{i} \, y_{i}}}} \, I_{d} 
	- 
	w  \, \pars[\Big]{\textstyle \cancel{\sum_{i=1}^{n} r_{i} \, y_{i}}}\transp}.
\end{align*}
Now we can see that $\varOmega \, C$ (like $\gamma$) is a weighted sum of the projectors $I_d - y_i \, y_i\transp$. Since $y \in \Pol^\times$, this sum is full rank (see the proof of \autoref{lem:BBstar_invertible}). This shows that the determinant does not vanish.
\end{proof}

We can now prove~\autoref{theo:FastJ}.

\begin{proof}[Proof of~\autoref{theo:FastJ}]
The formula for $J\op$ in~\eqref{eq:Jop_formula} is an immediate consequence of \autoref{lem:J=det(AA)det(ZZ)}, \autoref{lem:detSS}, \autoref{lem:detAA}, and \autoref{lem:detPAAP}. We have mentioned already that both $\sum_{i=1}^{n} r_{i} \, \pars[\big]{I_{d} - y_{i}\,y_{i}\transp} $ and $\sum_{i=1}^{n} r_{i}^{2}/\varrho_{i}^{2} \, \pars[\big]{I_{d} - y_{i}\,y_{i}\transp}$ are positive definite matrices of size $d \times d$. 
They can be computed in $O(n \, d^{2})$ time and $O(n\,d +  d^{2})$ memory. Their determinants can be computed, e.g., by Cholesky factorization in $O(d^{3})$ time and $O(d^{2})$ memory. In total, we require $O((n+d) \, d^{2})$ time and $O((n +d)\,d)$ memory. 
\end{proof}

\subsection{Modified Monte-Carlo sampling}

One severe problem with this sampling seems to be that $J\op(w,y)$ may become quite small sometimes, resulting in some nasty outliers of $1/J\op(w,y)$.
Let's have a closer look at the formula from \autoref{theo:FastJ}. After reordering we have
\begin{align*}
	J\op(w,y)
	=
	2^{d}
	\,
	\pars*{ 1-\nabs{w}^2 }^{n\,(d-1) -d}
	\frac{
		\det \pars*{ \sum_{i=1}^{n} r_{i} \, \pars[\big]{I_{d} - y_{i}\,y_{i}\transp} }
	}{
		\sqrt{
			\det \pars*{ \sum_{i=1}^{n} r_{i}^{2}/\varrho_{i}^{2} \, \pars[\big]{I_{d} - y_{i}\,y_{i}\transp} }
		}
	}
	\,
	\pars*{
		\prod_{i=1}^n \, \nabs{w + y_i}^{2} 
	}^{1-d} 
	.
\end{align*}
The number $1-\nabs{w}^2$ is raised to the power $n\,(d-1) -d$. For $n \gg d$ it may become very small even if $w$ is not very close to the boundary of the disk. This motivates us to choose the weight function $\chi \colon \Disk \to \R$ in~\autoref{prop:sampling weights} as follows:
\begin{align*}
	\chi(w) \ceq \lambda \, 2^{d} \, \pars*{ 1-\nabs{w}^2 }^{n\, (d-1)-d}.
\end{align*}
where $\lambda$ is a constant chosen so that $\int_\Disk \chi(w) \, \dvol_{\Disk} = 1$. 
We obtain the following weights:
\begin{align}
	\label{eq:final sampling weights}
	K(w,y) \ceq \frac{\chi(w)}{J\op(w,y)}
	=
	\lambda \, 
	\frac{
		\sqrt{
		\det \pars*{ \sum_{i=1}^{n} r_{i}^{2}/\varrho_{i}^{2} \, \pars[\big]{I_{d} - y_{i}\,y_{i}\transp} }
		}
	}{
		\det \pars*{ \sum_{i=1}^{n} r_{i} \, \pars[\big]{I_{d} - y_{i}\,y_{i}\transp} }
	}
	\,
	\pars*{
		\prod_{i=1}^n \, \nabs{w + y_i}^2
	}^{d-1}
	,
\end{align}
noting that a direct computation yields 
\begin{align*}
	\lambda = \frac{1}{2^d \, \pi^{d/2}} 
	\, 
	\frac{\Gamma\pars[\big]{ (n-1) \, (d-1) + \frac{d}{2}}}{\Gamma\pars[\big]{(n-1) \, (d-1)}}
	.
\end{align*}
However in practice (e.g., when using these weights for sampling as in~\autoref{cor:MonteCarloSampling}) it is more convenient to ignore $\lambda$ entirely, as the factors of $\lambda$ in the numerator and denominator of~\eqref{eq:MonteCarloFormula} cancel.

\section{The quotient by $\SO(d)$ }

In many contexts, one is interested in shapes of polygons but not their poses in $\R^d$. Therefore, it is desirable to identify configurations which are related by a rigid motion and study the resulting moduli space of equivalence classes of polygons. This is the point of view taken by mathematicians who study polygons in $\R^3$ via their symplectic structure~\cite{MillsonKapovich1996, Hausmann1996, Mandini2014,Kamiyama1999b}, and by the present authors in our previous papers~\cite{CDSU,CSS}. We now see how to transfer our work to this moduli space.

The diagonal action of the special orthogonal group $\SO(d)$ on $(\R^d)^n$ is an action by isometries of our metric $\ninnerprod{u,v}_\varrho = \sum_{i=1}^n \varrho_i^2\, \ninnerprod{u_i,v_i}$. Since each $\Sphere$ is invariant under this action, it restricts to an action by isometries on $(\Arm, g_{\Arm})$. Since the closure condition $\sum_{i=1}^n r_i \, y_i = 0$ is also $\SO(d)$-invariant, the action further restricts to an action by isometries on $(\Disk \times \Pol, \ninnerprod{\cdot,\cdot} \times g_{\Pol})$. 
Since $\op$ is M\"obius-equivariant and $\SO(d)$ is a subgroup of the M\"obius group, the diffeomorphism $\op \colon \Disk \times \Pol^\times \to \Arm^\times$ is $\SO(d)$-equivariant. These actions are faithful, but they are not free if the $y_i$ lie in some lower-dimensional subspace of $\R^d$. So now we slightly restrict our attention to avoid these troublesome configurations.

\begin{definition}
We define $\Arm^\diamond$ to the be subset of $\Arm^\times$ where the points $x_1, \dots, x_n$ do not lie in any affine hyperplane in $\R^d$. We define $\Pol^\diamond$ to be the subset of $\Pol^\times$ where the $y_i$ do not lie in any linear hyperplane in $\R^d$. 
\end{definition}

\begin{lemma}  The closure map $\cl$ and the opening map $\op$ are diffeomorphisms between $\Arm^\diamond$ and $\Disk \times \Pol^\diamond$. If $n > d$, then $\Arm \setminus \Arm^\diamond$ and $\Pol \setminus \Pol^\diamond$ are nullsets.
\label{lem:pol and arm diamond}
\end{lemma}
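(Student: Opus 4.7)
The plan is to handle the two assertions separately, deriving both from \autoref{theo:FastJ} plus a short argument comparing ``round spheres'' on $\Sphere$ with affine and linear hyperplanes of $\R^d$. Since $\Arm^\diamond$ and $\Disk \times \Pol^\diamond$ are open in $\Arm^\times$ and $\Disk \times \Pol^\times$, the first assertion reduces to showing that the known diffeomorphism $\cl \colon \Arm^\times \to \Disk \times \Pol^\times$ sends $\Arm^\diamond$ bijectively onto $\Disk \times \Pol^\diamond$. This rests on two observations. First, for $y \in \Pol$ the affine and linear hyperplane conditions coincide: if $\langle v, y_i\rangle = c$ for all $i$, then $0 = \langle v, \sum_i r_i\, y_i\rangle = c \sum_i r_i$ forces $c = 0$. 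Second, each shift map $\Shift(w,\cdot) \colon \Sphere \to \Sphere$ is a M\"obius transformation and therefore sends round $(d-2)$-spheres on $\Sphere$ (the intersections of $\Sphere$ with affine hyperplanes of $\R^d$) to round $(d-2)$-spheres. Consequently, the property ``$x_1,\dotsc,x_n$ lie in a common affine hyperplane'' is preserved by $\Shift$. Chaining these two facts: for $\cl(x) = (w_*,y)$ we have $x \in \Arm^\diamond$ iff the $x_i$ affinely span $\R^d$ iff the $y_i$ affinely span $\R^d$ iff, by closure, the $y_i$ linearly span $\R^d$ iff $y \in \Pol^\diamond$.

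For the nullset statement I would use a dimension count. The set $\Arm \setminus \Arm^\diamond$ is, up to the lower-dimensional tangential locus where the hyperplane is tangent to $\Sphere$, the image of the smooth map
\begin{align*}
	\Phi \colon \Set{(v, c, x) \in \Sphere \times \intervaloo{-1,1} \times (\Sphere)^n | \langle v, x_i \rangle = c \text{ for all $i$}} \to \Arm,
	\quad
	(v, c, x) \mapsto x.
\end{align*}
The domain fibers over the $d$-dimensional base $\Sphere \times \intervaloo{-1,1}$ (parameterizing affine hyperplanes meeting $\Sphere$ in a nontrivial round sphere via unit normal and signed distance) with $(d-2)$-sphere fibers in each of the $n$ factors, so it has dimension $d + n(d-2)$. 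The assumption $n > d$ gives $d + n(d-2) < n(d-1) = \dim \Arm$, so the image has $\vol_\varrho$-measure zero by the standard fact that smooth maps from manifolds of strictly lower dimension have nullset image.

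To transfer to $\Pol$, I would invoke the first part: the diffeomorphism $\op \colon \Disk \times \Pol^\times \to \Arm^\times$ restricts to a diffeomorphism $\Disk \times \Pol^\diamond \to \Arm^\diamond$, so $\Disk \times (\Pol^\times \setminus \Pol^\diamond)$ is the preimage of the nullset $\Arm^\times \setminus \Arm^\diamond$ and is itself null. Fubini then yields $\vol_{\varrho,r}(\Pol^\times \setminus \Pol^\diamond) = 0$, and combining with the known nullity of $\Pol \setminus \Pol^\times$ completes the claim. The main obstacle is the M\"obius-invariance of round spheres; this is classical, but it may need a short direct verification from formula~\eqref{eq:shift} to keep the argument self-contained.
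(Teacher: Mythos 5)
Your proof is correct, and your argument for the diffeomorphism part takes a genuinely different (and arguably cleaner) route than the paper's. The paper argues by contradiction via hyperbolic geometry: if the $x_i$ lay in an affine hyperplane, they would lie on a $(d-2)$-sphere $S^{d-2} \subset \Sphere$, and the conformal barycenter $w$ would lie on the unique $(d-1)$-sphere $S$ through $S^{d-2}$ orthogonal to $\Sphere$ (implicitly, because $w$ is fixed by the hyperbolic reflection across the totally geodesic hyperplane with boundary $S^{d-2}$); since the shift carries $S$ to a sphere-or-plane through the origin meeting $\Sphere$ orthogonally, that image is a linear hyperplane containing the $y_i$. Your route bypasses the orthogonal sphere and the reflection argument entirely: you note that Möbius self-maps of $\Sphere$ preserve the ``lie on a common round $(d-2)$-sphere'' (equivalently, ``lie in a common affine hyperplane'') condition, and then observe that for $y \in \Pol$ the closure constraint $\sum_i r_i\,y_i = 0$ forces any affine relation $\langle v, y_i\rangle = c$ to have $c = 0$, so the affine-hyperplane condition for $y$ equals the linear-hyperplane condition used to define $\Pol^\diamond$. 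This affine-equals-linear observation is the key simplification; the paper never states it explicitly and instead buys the same conclusion by tracking the origin through the Möbius transformation. Both arguments implicitly need $x, y$ to have at least two distinct entries so that a containing hyperplane cannot be tangent to $\Sphere$, which is guaranteed since you are working inside $\Arm^\times$ and $\Pol^\times$.

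For the nullset statement, your dimension count via the incidence variety $\Phi$ is essentially a rigorous version of the paper's one-line probabilistic argument (``$x_1,\dots,x_d$ almost surely affinely span a hyperplane, and $x_{d+1}$ lies in it with probability zero''); both amount to the observation that the degenerate locus has codimension $n - d > 0$. Your explicit Fubini step to pass from $\Disk \times (\Pol^\times \setminus \Pol^\diamond)$ being null to $\Pol^\times \setminus \Pol^\diamond$ being null is actually more careful than the paper, which abbreviates this to ``$\cl$ maps $\Arm^\times \setminus \Arm^\diamond$ to $\Pol^\times \setminus \Pol^\diamond$'' and leaves the product-to-factor deduction to the reader. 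No gaps; the proposal stands on its own.
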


\begin{proof} 
Suppose $(w,y) \in \Disk \times \Pol^\diamond$. We claim that $x = \op(w,y) \in \Arm^\diamond$. Suppose not. Then the $x_i$ lie in an affine hyperplane in $\R^d$. Since the $x_i$ also lie in the unit sphere $\Sphere$, they lie in some $S^{d-2}$ formed by the intersection of the affine hyperplane with $\Sphere$. It follows that the $x_i$ and their conformal barycenter $w$ lie on some unique $S^{d-1} \subset \R^d$, and that this sphere $S$ intersects the unit sphere at right angles. 

Now the $y_i$ (and the origin) are the image of the $x_i$ (and $w$) under a M\"obius transformation. Therefore, they lie in the image of $S$ under this M\"obius transformation. Since this image is either a sphere or a hyperplane, since it meets the unit sphere $\Sphere$ at right angles, and since  it contains the origin, it must be a hyperplane. This contradicts our assumption that $(w,y) \in \Disk \times \Pol^\diamond$. Thus $\op$ maps $\Disk \times \Pol^\diamond$ into $\Arm^\diamond$. 

The argument that $\cl$ maps $\Arm^\diamond$ into $\Disk \times \Pol^\diamond$ is quite similar. Suppose $x \in \Arm^\diamond$, but $\cl(x)$ is not in $\Pol^\diamond$. Then the $y_i$ all lie in some hyperplane $H$. Now the opening map $\op(w,y)$ is a M\"obius transformation (of the $y_i$), so it maps $H$ to some $S^{d-1}$ containing $w$. This sphere also contains the $x_i$. Since the $x_i$ are also on the unit sphere, they lie in the intersection of two different $d-1$ spheres, which is in turn contained in some affine hyperplane. But this contradicts our assumption that $x \in \Arm^\diamond$. 

Now $\cl$ and $\op$ are diffeomorphisms between the larger sets $\Arm^\times$ and $\Disk \times \Pol^\times$, so they remain diffeomorphisms on the open subsets $\Arm^\diamond$ and $\Disk \times \Pol$. Now observe that $\Arm \setminus \Arm^\diamond$ is the disjoint union of $\Arm \setminus \Arm^\times$ and $\Arm^\times \setminus \Arm^\diamond$. We know that $\Arm \setminus \Arm^\times$ is a nullset (\autoref{def:pol times and arm times}). We claim that $\Arm^\times \setminus \Arm^\diamond$ is a nullset, too: The probability that the points $x_1, \dots, x_d$ span an affine hyperplane in $\R^d$ is one, while the probability that $x_{d+1}$ lies in the same affine hyperplane is zero. 

Finally, since the diffeomorphism $\cl$ maps $\Arm^\times \setminus \Arm^\diamond$ to $\Pol^\times \setminus \Pol^\diamond$, the latter is also a nullset. It follows as above (again, see~\autoref{def:pol times and arm times}) that $\Pol \setminus \Pol^\diamond$ is a nullset as well.  
\end{proof}

\begin{figure}[tbp]
\begin{center}
	\capstart
	\begin{tikzpicture}
	    \node[anchor=south west,inner sep=0] (image) at (0,0,0) {\includegraphics[width=0.6\textwidth,trim={0 0 0 0}]{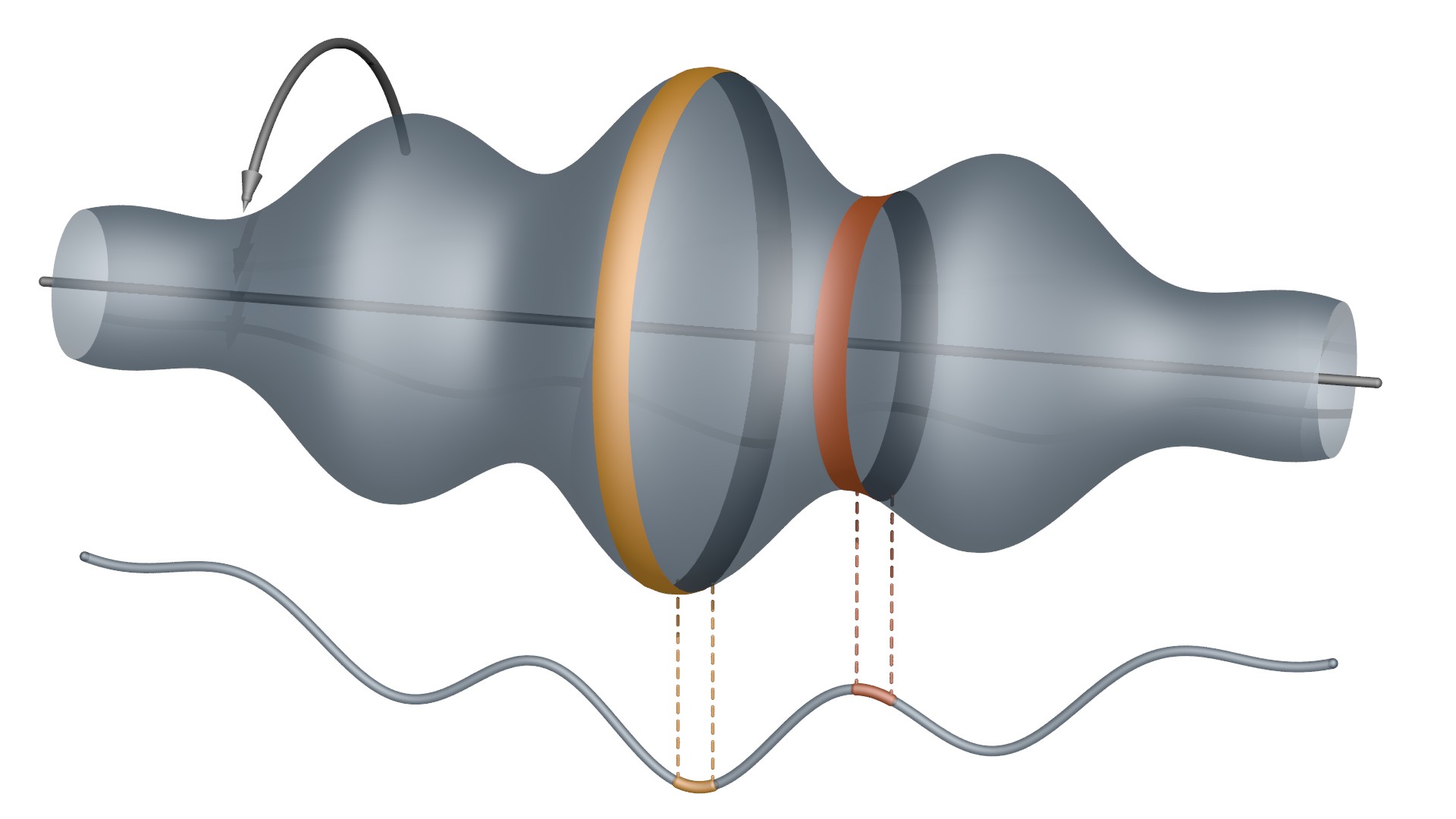}};
	    \begin{scope}[x={(image.south east)},y={(image.north west)}]
	        
			\node at ((0.38 ,0.95,0) {$\SO(d)$ action};
			\node at ((0.479,0.09 ,0) {$I$};
			\node at ((0.3  ,0.25 ,0) {$\QuotMap^{-1}(I)$};
			\draw[line width=0.5pt] (0.34,0.29,0) -- (0.425,0.4,0);
			\node at ((0.595,0.105,0) {$J$};
			\node at ((0.65 ,0.9  ,0) {$\QuotMap^{-1}(J)$};
			\draw[line width=0.5pt] (0.62,0.86,0) -- (0.58,0.7,0);
			\node at ((0.9  ,0.71 ,0) {$\Pol^\diamond$};
			\node at ((0.9  ,0.26 ,0) {$\PolQuot^\diamond$};
	    \end{scope}
	\end{tikzpicture}

\caption{Here the surface of revolution represents $\Pol^{\diamond}$ and the rotations about the axis represents the action of $\SO(d)$ on $\Pol^{\diamond}$ by isometries. The Riemannian quotient space $\PolQuot^\diamond$ is represented by the meridian curve. We see that the quotient metric (which measures products between vectors in the directions orthogonal to the rotations in the metric of $\Pol$) is the arclength metric on the meridian as a space curve shown below the surface. 
The yellow arc $I$ and orange arc $J$ on $\PolQuot^\diamond$ are subsets of equal volume according to the Riemannian probability measure $\PolQuotProb$. 
But they have very different volumes in the pushforward of $\PolProb$ under the quotient map as the corresponding yellow and orange annuli in $\Pol^{\diamond}$ have very different areas. An $\SO(d)$-invariant function on $\Pol^{\diamond}$ has a constant value on the fiber over any point in $\PolQuot^\diamond$. Therefore, it descends to a function on $\PolQuot^\diamond$ and may be integrated there with respect to either $\PolQuotProb$ or the pushforward $\QuotMap_\push \PolProb$. But we must expect the results to differ.}
\label{fig:Loxodrome}
\end{center}
\end{figure}

Now $\SO(d)$ acts smoothly, freely, and by isometries on $(\Pol^\diamond,g_{\varrho,r})$, so this is the total space of a principal bundle $\Pol^\diamond \overset{\QuotMap}{\longrightarrow} \Pol^\diamond/\SO(d)$. We will use the notation $\PolQuot^\diamond \ceq \Pol^\diamond/\SO(d)$.

The quotient space is an open manifold. One natural choice of measure on this space is the pushforward measure $\QuotMap_\push \PolProb$ of $\PolProb$ along the quotient map. 
This has the desirable feature that for any $\SO(d)$-invariant integrable $f \colon \Pol \to \R$ we may unambiguously define $\hat{f} \colon \PolQuot \to \R$ such that $f = \hat{f} \circ \pi$ and get 
\begin{align}
\int_{\Pol} f \, \dd \PolProb = \int_{\PolQuot} \hat{f} \, \dd (\QuotMap_\push \PolProb).
\end{align}
However, this is~\emph{not} the usual choice in the literature on polygon spaces. Instead, we let the quotient space $\PolQuot^\diamond$ inherit the Riemannian quotient metric from $(\Pol^\diamond,g_{\varrho,r})$ and construct the corresponding Riemannian volume measure $\PolQuotVol$. In turn, this volume measure defines a Riemannian probability measure
\[
	\PolQuotProb \ceq \frac{1}{\PolQuotVol \pars[\big]{\PolQuot^\diamond}} \PolQuotVol
\] on $\PolQuot^\diamond$ (where we extend the measure from $\PolQuot^\diamond$ to $\PolQuot$ by zero). The pushforward measure and the metric measure are really different from one another; \autoref{fig:Loxodrome} illustrates the issue.

\begin{proposition}
\label{prop:QuotientCorrectionFactor}
Suppose that $\varphi \colon \PolQuot \rightarrow \R$ is an integrable function. Then
\begin{align*}
	\int_{\PolQuot} \varphi(y) \, \dd \PolQuotVol(y) 
	= 
	\int_{\PolQuot^\diamond} \varphi(y) \, \dd \PolQuotVol(y) 
	= 
	\int_{\Arm^\diamond} \varphi \pars[\big]{ \QuotMap( y_*(x)) }\, \hat{K}(\cl(x)) \, \dd \vol_{\varrho}(x)
	.
\end{align*}
Here, if $\lambda_1(y), \dots, \lambda_d(y)$ are the eigenvalues of $\varSigma(y) \ceq \sum_{i=1}^n \varrho_i^2 \,y_i \, y_i\transp$, and $K(w,y)$ are the sampling weights from~\eqref{eq:final sampling weights}, we define
\begin{align}
\label{eq:complicated quotient sampling weights}
	\hat{K}(w,y) 
	\ceq 
	\frac{K(w,y)}{\vol(\SO(d))} 
	\,
	\pars*{
		 \prod_{1 \leq k < l < n} 
		 \frac{\lambda_{k}(y)+\lambda_{l}(y)}{2} 
	}^{-1/2} .
\end{align}
\end{proposition}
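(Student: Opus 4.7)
The plan is to combine three ingredients: a fiber-integration formula for the Riemannian submersion $\QuotMap\colon \Pol^\diamond \to \PolQuot^\diamond$, an explicit computation of the Riemannian volume of the orbit $\SO(d)\cdot y \subset \Pol^\diamond$, and \autoref{prop:sampling weights} applied through the diffeomorphism $\cl\colon \Arm^\diamond \to \Disk \times \Pol^\diamond$ guaranteed by \autoref{lem:pol and arm diamond}. The first equality in the statement is immediate, since $\Pol \setminus \Pol^\diamond$ is a null set (again by \autoref{lem:pol and arm diamond}), so the restriction of $\PolQuotVol$ to $\PolQuot^\diamond$ agrees with $\PolQuotVol$ up to a nullset.

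First we set up the fiber-integration formula. Because $\SO(d)$ acts freely and by isometries on $(\Pol^\diamond, g_{\varrho,r})$, the quotient map $\QuotMap$ is a Riemannian submersion whose fibers $\QuotMap^{-1}(\QuotMap(y)) = \SO(d)\cdot y$ are compact embedded submanifolds. The standard coarea identity for Riemannian submersions then yields, for any $\SO(d)$-invariant integrable $F\colon \Pol^\diamond \to \R$ descending to $\hat F$ on $\PolQuot^\diamond$,
\begin{align*}
	\int_{\PolQuot^\diamond} \hat F(\bar y) \dd \PolQuotVol(\bar y) = \int_{\Pol^\diamond} \frac{F(y)}{\vol(\SO(d)\cdot y)} \dd \vol_{\varrho,r}(y).
\end{align*}

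Next we compute $\vol(\SO(d)\cdot y)$ explicitly. The orbit map $\phi_y\colon \SO(d) \to \Pol^\diamond$, $g \mapsto g\cdot y$, is a diffeomorphism onto its image because stabilizers are trivial on $\Pol^\diamond$. Its derivative at the identity sends $A \in \mathfrak{so}(d)$ to $(Ay_1,\dotsc,Ay_n)$, giving the pullback inner product $\ninnerprod{A,B}_y \ceq \sum_i \varrho_i^2 \, \ninnerprod{A y_i, B y_i} = \tr(A\transp B\, \varSigma(y))$ there. Because $\phi_y$ intertwines left multiplication on $\SO(d)$ with the (isometric) $\SO(d)$-action on $\Pol$, the pullback metric $\phi_y^* g_{\varrho,r}$ is left-invariant on $\SO(d)$, and hence
\begin{align*}
	\vol(\SO(d)\cdot y) = \vol(\SO(d)) \cdot \sqrt{\det \hat M(y)},
\end{align*}
where $\vol(\SO(d))$ denotes the volume of $\SO(d)$ in the bi-invariant metric associated to the Frobenius inner product on $\mathfrak{so}(d)$, and $\hat M(y)\colon \mathfrak{so}(d) \to \mathfrak{so}(d)$ is the Frobenius-self-adjoint operator determined by $\ninnerprod{A, \hat M(y)\, B}_F = \ninnerprod{A,B}_y$. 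Since $B\,\varSigma(y)$ need not lie in $\mathfrak{so}(d)$, orthogonally projecting onto $\mathfrak{so}(d)$ (the Frobenius-orthogonal complement of the symmetric matrices) gives $\hat M(y)(B) = \tfrac{1}{2}\pars{B\,\varSigma(y) + \varSigma(y)\, B}$. Diagonalizing $\varSigma(y)$ in an orthonormal eigenbasis $e_1,\dotsc,e_d$ with eigenvalues $\lambda_k(y)$ and evaluating $\hat M(y)$ on the standard generators $E_{kl} \ceq e_k e_l\transp - e_l e_k\transp$ of $\mathfrak{so}(d)$ then shows that the eigenvalues of $\hat M(y)$ are $\tfrac{1}{2}(\lambda_k(y) + \lambda_l(y))$ indexed by pairs $k < l$, whose product is exactly the factor appearing in the definition of $\hat K$.

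Finally, we apply the fiber-integration identity with $F(y) \ceq \varphi(\QuotMap(y))$ (which is $\SO(d)$-invariant and integrable) and then invoke \autoref{prop:sampling weights} on $\Pol^\diamond$ (valid since $\Pol \setminus \Pol^\diamond$ is null and $\cl$ is a diffeomorphism onto $\Disk \times \Pol^\diamond$) to pull the resulting integral back to $\Arm^\diamond$ through $\cl$. Substituting the orbit-volume formula for $\vol(\SO(d)\cdot y_*(x))$ groups all the weight factors into the claimed $\hat K(\cl(x))$. The principal technical hurdle is the orbit-volume computation in Step two: the symmetrization is forced because $B\,\varSigma(y) \notin \mathfrak{so}(d)$ in general, and it is left-invariance of $\phi_y^* g_{\varrho,r}$ that reduces the orbit volume to a single determinant at the identity. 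Positivity of $\varSigma(y)$ on $\Pol^\diamond$—where the $y_i$ span $\R^d$—guarantees that every $\lambda_k(y) + \lambda_l(y) > 0$, so the square root in $\hat K$ is well-defined and nonzero.
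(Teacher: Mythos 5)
Your argument is correct and follows essentially the same route as the paper: fiber integration over the free $\SO(d)$-orbits of the Riemannian submersion $\QuotMap$, an explicit computation of the orbit volume via the orbit map and its differential at the identity, diagonalization of $\varSigma(y)$ to produce the eigenvalues $\tfrac{1}{2}(\lambda_k+\lambda_l)$, and then Proposition~\ref{prop:sampling weights} pulled back through $\cl\colon \Arm^\diamond \to \Disk\times\Pol^\diamond$. Your phrasing of the orbit-volume step via the Frobenius-self-adjoint operator $\hat M(y)$ (and its symmetrization $B\mapsto\tfrac12(B\varSigma+\varSigma B)$) is an equivalent repackaging of the paper's computation of the Gram matrix $Df(I_d)\adj Df(I_d)$ in the basis $\xi(i,j)$, and if anything makes the role of the projection onto $\mathfrak{so}(d)$ a bit more transparent.
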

 
\begin{proof}
The coarea formula (see \cite[p.160]{zbMATH05034390} for a formulation in terms of differential forms or \cite[Section~3.4.2]{Evans_2018} for one in terms of Hausdorff and Lebesgue measures) tells us that
\begin{align*}
	\int_{\Pol^\diamond} \varphi(\QuotMap(y)) \, J\QuotMap(y) \, \dd \vol_{\varrho,r}(y)
	=
	\int_{\PolQuot^\diamond} 
	\pars*{ 
		\int_{\QuotMap^{-1}(z)} \varphi(\QuotMap(y)) \, \dvol_{\varrho,r}^{\QuotMap^{-1}(z)} (y) 
	} 
	\, \dd \PolQuotVol (z),
\end{align*}
where 
$
	J\QuotMap(y)
	=
	\sqrt{ \det \pars*{ \dd \QuotMap(y) \, \dd \QuotMap(y)\adj}}.
$
Since $\SO(d)$ acts freely, each fiber $\QuotMap^{-1}(z)$ is parametrized by $\SO(d)$, and the submanifold volume $\dvol_{\varrho,r}^{\QuotMap^{-1}(z)}$ on the fiber with respect to the ambient metric $g_{\varrho,r}$ is also the $\dim \SO(d) = d(d-1)/2$-dimensional Hausdorff measure $\mathcal{H}^{d(d-1)/2}$ with respect to this metric.

In the quotient metric, $\QuotMap$ is a Riemannian submersion, so $J\QuotMap(y) = 1$.  Observing that $\varphi \circ \QuotMap$ is constant on each fiber of $\QuotMap$, we can simplify the above to
\begin{align*}
	\int_{\Pol^\diamond} \varphi(\QuotMap(y)) \, \dd \vol_{\varrho,r}(y)
	=
	\int_{\PolQuot^\diamond} 
	\varphi(z) \, 
	\mathcal{H}^{d(d-1)/2} (\QuotMap^{-1}(z))
	\, \dd \PolQuotVol(z).
\end{align*}
The fiber $\QuotMap^{-1}(z)$ is an orbit of the $\SO(d)$ action. We now set out to calculate the volume of this orbit. There is no reason to expect that all $\SO(d)$ orbits will have the same volume, so we start by fixing some $y \in \QuotMap^{-1}(z)$ and compute the orbit volume in terms of $y$. 

We start by parametrizing the orbit of $y$ by the smooth map $f \colon \SO(d) \to \Pol$ given by $f(Q) = (Q \, y_1,\dotsc,Q\,y_n) = \diag(Q) \, y$. The orbit volume is then computed by the integral 
\begin{align*}
	\int_{\SO(d)} \sqrt{\det \pars[\big]{ Df(Q)\adj \, Df(Q)}} \, \dd \vol(Q).
\end{align*}
The map $f$ is $\SO(d)$ equivariant, i.e., we have $f(Q \, Q') = \diag(Q) \, f(Q')$ for all $Q$, $Q' \in \SO(d)$. Applying the derivative with respect to $Q'$ on both sides, we obtain with the chain rule that $Df(Q \, Q') \, Q  = \diag(Q) \, Df(Q')$. For $Q' = I_d$ this can be rewritten as $Df(Q)  = \diag(Q) \, Df(I_d) \, Q\adj$.
Thus we have 
\begin{align*}
	\det \pars[\big]{ Df(Q)\adj \, Df(Q)} 
	&= 
	\det \pars[\Big]{ (\diag(Q) \, Df(I_d) \, Q\adj)\adj \, \diag(Q) \, Df(I_d) \, Q\adj}
	\\
	&=
	\det \pars[\Big]{ Q\adj \, Q \, Df(I_d)\adj \, \diag(Q)\adj \, \diag(Q) \, Df(I_d)}
	=
	\det \pars[\big]{ Df(I_d)\adj \, Df(I_d) }.
\end{align*}
Here we exploited that $\diag(Q)$ is an isometry with respect to $g_{\varrho,r}$ and that $\SO(d)$ acts on itself isometrically with respect to the Frobenius metric.
Therefore, the integrand is constant and it suffices to evaluate it at $Q = I_d$. 

At $I_d$, the tangent space to $\SO(d)$ consists of the skew-symmetric matrices. Further, if $\xi$ and $\eta$ are $d \times d$ skew-symmetric matrices, then
\begin{align*}
	Df(I_d) \, \xi 
	= 
	\begin{pmatrix}
		\xi \, y_1
		\\ 
		\vdots 
		\\ 
		\xi \, y_n
	\end{pmatrix}
	\qand
	Df(I_d) \, \eta 
	= 
	\begin{pmatrix}
		\eta \, y_1
		\\ 
		\vdots 
		\\ 
		\eta \, y_n
	\end{pmatrix}
	.
\end{align*}
We now observe that (using the cyclic invariance of trace and the skew-symmetry of $\xi$),
\begin{gather*}
	\ninnerprod{\xi, Df(I_d)\adj \, Df(I_d) \, \eta}_{\text{Frob}}
	=
	\ninnerprod{Df(I_d) \, \xi, Df(I_d) \, \eta}_\varrho
	=
	\sum_{i=1}^n \varrho_i^2 \, \ninnerprod{\xi \, y_i, \eta \, y_i}
	\\	
	=
	\sum_{i=1}^n \varrho_i^2 \, \tr \pars[\Big]{ y_i\transp \, \xi\transp \,\eta \, y_i }
	=
	\sum_{i=1}^n \varrho_i^2 \, \tr \pars[\Big]{ \eta \, \pars[\big]{y_i \, y_i\transp} \, \xi\transp}
	=
	-\tr \pars[\big]{ \eta \, \varSigma \, \xi }
	.
\end{gather*}
We now choose $\eta$ and $\xi$ in order to calculate entries of the $(d(d-1)/2) \times (d(d-1)/2)$ matrix $Df(I_d)\adj \, Df(I_d)$. We first choose an orthonormal basis for $\R^d$ which diagonalizes the symmetric $d \times d$ matrix $\varSigma$ and assume the diagonal entries are $\lambda_1, \dotsc, \lambda_d$. With respect to this basis for $\R^d$, the $d \times d$ skew-symmetric matrices have an orthonormal basis given by matrices 
\begin{align*}
\xi(i,j)_{k,l} = \begin{cases}
\frac{1}{\sqrt{2}} & \text{if $(k,l)=(i,j)$,} \\
-\frac{1}{\sqrt{2}} & \text{if $(k,l)=(j,i)$,} \\
0 & \text{otherwise.}
\end{cases}
\end{align*}
Note that multiplying by $\xi(i,j)$ on the right swaps the $i$-th and $j$-th columns, multiplying the $j$-th by $-1/\sqrt{2}$ and the $i$-th by $+1/\sqrt{2}$. We may then compute 
\begin{align*}
	\pars[\big]{Df(I_d)\adj \, Df(I_d)\adj}_{(i,j),(k,l)} 
	\ceq 
	-\tr \pars[\Big]{ \xi(i,j) \, \varSigma \, \xi(k,k) } 
	= -\tr \pars[\Big]{ \xi(k,l) \, \xi(i,j) \,\varSigma} 
	= \frac{ \lambda_{i} + \lambda_{j} }{2} \, \updelta_{(i,j),(k,l)} 
\end{align*}
as the matrix product $\xi(k,l) \, \xi(i,j)$ has diagonal elements if and only if $(i,j) = (k,l)$, in which case it has $-\frac{1}{2}$ in positions $k$ and $l$. Thus $Df(I_d)\adj \, Df(I_d)$ is a diagonal matrix with diagonal entries $\frac{ \lambda_{k} + \lambda_{l} }{2}$. 
It follows that the orbit volume is 
\[
	\vol(\SO(d) \, y) 
	= 
	\vol(\SO(d)) \, \det \pars[\big]{ Df(I_d)\adj \, Df(I_d) }
	=
	\vol(\SO(d)) \, \pars*{ 
		\prod_{1 \leq k < l \leq n} 
		\frac{\lambda_{k}+\lambda_{l}}{2} 
	}^{1/2}.
	\qedhere
\]
\end{proof}

As before, we can immediately write down the formula for Monte Carlo sampling:

\begin{corollary}
If $f \colon \PolQuot \rightarrow \R$ is integrable and if $x^{(j)}$ is a sequence of independent samples drawn from $\ArmProb$ on $\Arm$, then 
\begin{align}
\frac{
		\sum_{j=1}^N f \pars[\big]{ \QuotMap(y_*(x^{(j)})) } \, \hat{K}(\cl(x^{(j)}))
	}{
		\sum_{j=1}^N \hat{K}(\cl(x^{(j)})
	}
	\rightarrow
	\int_{\PolQuot} f \, \dd \PolQuotProb
	\quad
	\text{almost surely as $N \rightarrow \infty$.}
\label{eq:MonteCarloFormula2}
\end{align}
\label{cor:MonteCarloQuotientSampling}
\end{corollary}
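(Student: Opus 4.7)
The plan is to combine the strong law of large numbers (SLLN) with \autoref{prop:QuotientCorrectionFactor}, mirroring the way \autoref{cor:MonteCarloSampling} follows from \autoref{prop:sampling weights}. Note that $\Arm \setminus \Arm^\diamond$ is a $\ArmProb$-nullset by \autoref{lem:pol and arm diamond} (since $n \geq 3$ and, in the nontrivial range $n > d$, the $x_i$ generically do not lie in an affine hyperplane; when $n \leq d$ we work directly on $\Arm^\times$ as needed), so almost surely every sample $x^{(j)}$ lies in $\Arm^\diamond$ and $\cl(x^{(j)})$, $y_*(x^{(j)})$, $\hat K(\cl(x^{(j)}))$ are all well-defined.

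First, I would apply \autoref{prop:QuotientCorrectionFactor} with $\varphi = f$ to rewrite
\[
\int_{\PolQuot} f(y) \, \dvol_{\varrho,r}(y) \;=\; \int_{\Arm^\diamond} f\pars[\big]{\QuotMap(y_*(x))} \, \hat K(\cl(x)) \, \dvol_\varrho(x),
\]
and (separately) with $\varphi \equiv 1$ to obtain
\[
\PolQuotVol(\PolQuot) \;=\; \int_{\Arm^\diamond} \hat K(\cl(x)) \, \dvol_\varrho(x).
\]
Dividing both sides of each identity by $\vol_\varrho(\Arm)$ turns them into expectations with respect to the probability measure $\ArmProb = \vol_\varrho/\vol_\varrho(\Arm)$:
\[
\mathbb{E}_{\ArmProb}\pars[\big]{ f\pars{\QuotMap(y_*(X))} \, \hat K(\cl(X)) } = \frac{1}{\vol_\varrho(\Arm)} \int_{\PolQuot} f \, \dvol_{\varrho,r},
\qquad
\mathbb{E}_{\ArmProb}\pars[\big]{ \hat K(\cl(X)) } = \frac{\PolQuotVol(\PolQuot)}{\vol_\varrho(\Arm)}.
\]
The right-hand expectations are finite because $f$ is assumed integrable against $\PolQuotVol$ and $\PolQuotVol(\PolQuot) < \infty$ (this is where I check the SLLN hypothesis for the numerator; if $f$ is only locally integrable, one uses a truncation argument).

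Next, since the $x^{(j)}$ are i.i.d.\ draws from $\ArmProb$, Kolmogorov's SLLN applied separately to the sequences of real-valued random variables $f(\QuotMap(y_*(x^{(j)}))) \, \hat K(\cl(x^{(j)}))$ and $\hat K(\cl(x^{(j)}))$ yields
\[
\frac{1}{N} \sum_{j=1}^N f\pars[\big]{\QuotMap(y_*(x^{(j)}))} \, \hat K(\cl(x^{(j)})) \;\converges{\text{a.s.}}\; \frac{1}{\vol_\varrho(\Arm)} \int_{\PolQuot} f \, \dvol_{\varrho,r},
\]
\[
\frac{1}{N} \sum_{j=1}^N \hat K(\cl(x^{(j)})) \;\converges{\text{a.s.}}\; \frac{\PolQuotVol(\PolQuot)}{\vol_\varrho(\Arm)}.
\]
Because the denominator's almost-sure limit is a strictly positive constant (the weight $\hat K$ is strictly positive on $\Arm^\diamond$ by \autoref{theo:FastJ} and the fact that $\SO(d)$-orbit volumes are nonzero), I can take the quotient of these two almost-sure limits. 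The factors of $\vol_\varrho(\Arm)$ cancel, leaving
\[
\frac{\sum_{j=1}^N f\pars[\big]{\QuotMap(y_*(x^{(j)}))} \, \hat K(\cl(x^{(j)}))}{\sum_{j=1}^N \hat K(\cl(x^{(j)}))} \;\converges{\text{a.s.}}\; \frac{\int_{\PolQuot} f \, \dvol_{\varrho,r}}{\PolQuotVol(\PolQuot)} \;=\; \int_{\PolQuot} f \, \dd\PolQuotProb,
\]
which is the desired conclusion.

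The only real obstacle is verifying integrability for the SLLN: one must know that $(f\circ \QuotMap\circ y_*) \cdot (\hat K\circ \cl)$ is $\ArmProb$-integrable, but this is precisely what the first application of \autoref{prop:QuotientCorrectionFactor} guarantees, given integrability of $f$ with respect to $\PolQuotVol$. Everything else is an essentially verbatim repetition of the argument for \autoref{cor:MonteCarloSampling}, with $(K, \PolProb, \Pol)$ replaced by $(\hat K, \PolQuotProb, \PolQuot)$ and $\Arm^\times$ replaced by $\Arm^\diamond$.
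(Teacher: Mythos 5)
Your proof is correct and is essentially the standard self-normalized importance sampling argument that the paper leaves implicit (it merely invokes the law of large numbers "as usual," both here and for \autoref{cor:MonteCarloSampling}); applying \autoref{prop:QuotientCorrectionFactor} twice (with $\varphi = f$ and $\varphi \equiv 1$), renormalizing to $\ArmProb$, applying the SLLN to numerator and denominator, and dividing is exactly the intended chain. One small caveat: your parenthetical about the case $n \le d$ ("we work directly on $\Arm^\times$ as needed") is not really right — if $n \le d$ then $\Arm^\diamond$ fails to be of full measure and the $\SO(d)$ action is no longer free, so $\hat K$ as defined in \eqref{eq:complicated quotient sampling weights} is not the correct weight (the paper explicitly defers this degenerate regime); the clean statement is that the corollary holds under the standing assumption $n > d$, and you should simply say so rather than gesture at a fix.
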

We note that as in~\eqref{eq:final sampling weights}, we may ignore constant factors in the definition of the sampling weights $\hat{K}$ when using~\eqref{eq:MonteCarloFormula2}, as they will cancel in the numerator and denominator. Therefore, we can ignore $\vol(\SO(d))$ and the factors of $2$ in the denominator, effectively using
\begin{align}
	\hat{K}(w,y) 
	= 
	\frac{
		\sqrt{
		\det \pars*{ \sum_{i=1}^{n} r_{i}^{2}/\varrho_{i}^{2} \, \pars[\big]{I_{d} - y_{i}\,y_{i}\transp} }
		}
	}{
		\det \pars*{ \sum_{i=1}^{n} r_{i} \, \pars[\big]{I_{d} - y_{i}\,y_{i}\transp} }
	}
	\,
	\pars*{
		\prod_{i=1}^n \, \nabs{w + y_i}^2
	}^{d-1}
	\,
	\pars*{
		\prod_{1 \leq k < l \leq n}
		\pars[\big]{\lambda_{k}(y)+\lambda_{l}(y)}
	}^{-1/2}
\label{eq:final quotient sampling weights}
\end{align}
as sampling weights for $\PolQuotProb$ on $\PolQuot$.

We note that if $n<d$, the polygon always lies in a lower-dimensional subspace of $\R^d$ and the group $\SO(d)$ no longer acts freely. One can apply essentially the same techniques as above, and obtain a similar formula containing only the $\frac{1}{2} \, ( \lambda_{i} + \lambda_{j})$ for which $\lambda_{i} + \lambda_{j} > 0$. As this case is not of much relevance to Monte-Carlo sampling, we leave the details to the interested reader.


\section{Experiments}

We now give the results of some sample computations which show our method at work. All of these computations were made using our open-source implementation of the sampling algorithm~\cite{Cantarella_CoBarS_-_Conformal,Cantarella_CoBarSLink_-_A}. 

We have mentioned above that the symplectic volume by viewing the quotient space $\PolQuot$ of polygons in $\R^3$ as the symplectic reduction of $\Arm$ by the diagonal action of $\SO(3)$ at the zero fiber (as in~\cite{MillsonKapovich1996}) corresponds in our model to setting $\varrho_i = \sqrt{r_i}$. In~\autoref{fig:nonequilateral}, we test this statement explicitly by computing the distribution of the chord skipping the first three edges of an hexagon with unit sides and a hexagon with sidelengths $(1,1/2,3/2,1,1,1)$ in the $\PolQuotProb$ measure with $\varrho = \sqrt{r}$. 

The distribution of the length of the chord skipping $k$ edges in an $n$-gon under $\PolSympRedProb$ in $\R^3$ may be computed by conditioning randomly selected open polygons with $k$- and $n-k$ edges on having the same failure to close vector~(cf.~\cite{Moore2005,Varela2009}). These distributions are complicated piecewise-polynomial functions, but they have been known for some time~\cite[Section 5]{Dutka}. Using this method, we computed the distributions $f_{\text{eq}}$ and $f_{\text{neq}}$ of the length of the chord skipping three edges in an equilateral hexagon and a hexagon with edgelengths $r = (1,1/2,3/2,1,1,1)$ to be 
\begin{align}
f_{\text{eq}}(\ell) = \begin{cases} 
			\ell^2, & \text{if $0 \leq r \leq 1$,} \\
			\frac{(\ell-3)^2}{4}, & \text{if $1 \leq r \leq 3$,} \\
			0, & \text{otherwise.}
		  \end{cases}
\quad\qand\quad
f_{\text{neq}}(\ell) = \begin{cases}
 \frac{4 \ell^2}{5} & \text{if $0<\ell \leq 1$,} \\
 \frac{2(3-\ell)}{5}  & \text{if $1\leq \ell \leq 2$,} \\
 \frac{2 (3-\ell)^2}{5} & \text{if $2\leq \ell \leq 3$,} \\
 0, & \text{otherwise.}
\end{cases}
\label{eq:hexagon chords}
\end{align}

\begin{figure}[thp]
\begin{center}%
\capstart
\begin{overpic}[width=0.5\textwidth]{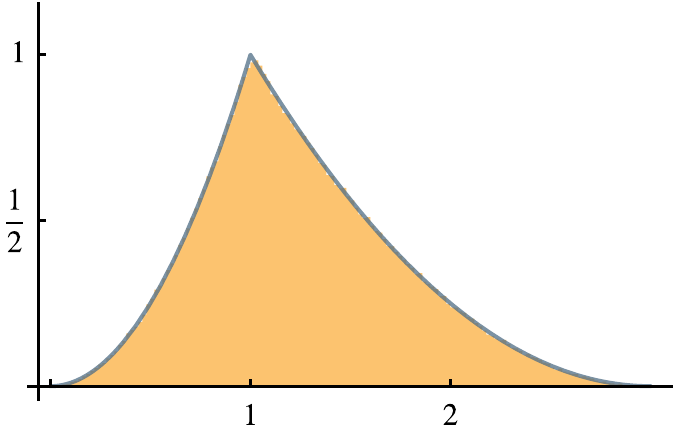}%
\put(50,50){$f_{\text{eq}}$}%
\end{overpic}%
\begin{overpic}[width=0.5\textwidth]{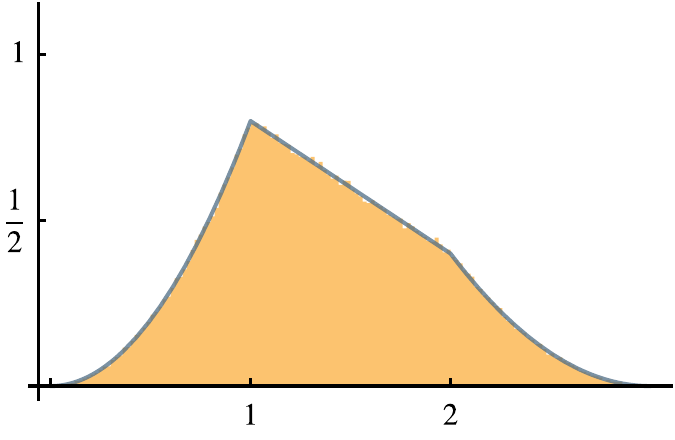}%
\put(50,50){$f_{\text{neq}}$}%
\end{overpic}%
\end{center}

\caption{On the left hand side, we see the theoretical pdf $f_{\text{eq}}$ for the length of a chord skipping three edges in an equilateral hexagon from the left of~\eqref{eq:hexagon chords}, plotted with a histogram of 1 million samples from $\PolSympRedProb$ weighted by the sampling weights in~\eqref{eq:final quotient sampling weights}. On the right hand side, we see the theoretical pdf $f_{\text{neq}}$ for the length of chord skipping the first three edges in a hexagon with sidelengths $r = (1,1/2,3/2,1,1,1)$ from the right of~\eqref{eq:hexagon chords}, also plotted with a histogram of 1 million samples from $\PolSympRedProb$ weighted by the sampling weights in~\eqref{eq:final quotient sampling weights}. We see that we resolve the peak clearly in both cases. We note that we also tested the results against a variant of the moment polytope sampling method of~\cite{CDSU}, confirming the result both times.}
\label{fig:nonequilateral}
\end{figure}

Next, we illustrate the difference between $\PolProb$ and $\PolQuotProb$ for equilateral tetragons in $\R^3$ by considering the distribution of the length of the chord joining vertices separated by two edges. The results are shown in~\autoref{fig:4gon chords}.

\begin{figure}[h]
\begin{center}
\capstart
\includegraphics[width=0.5\textwidth]{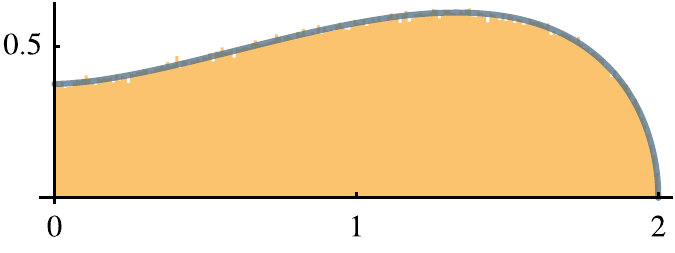}%
\includegraphics[width=0.5\textwidth]{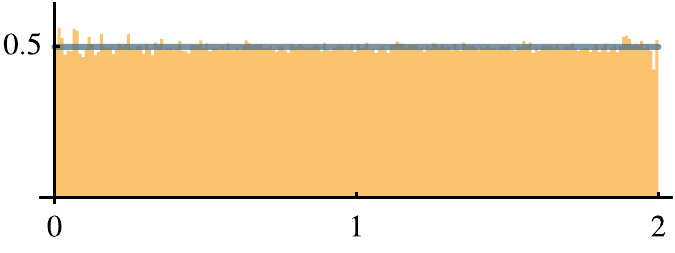}%
\end{center}
\caption{A detailed calculation reveals the probability distribution function of the length $\ell$ of the chord joining vertices separated by two edges of an equilateral tetragon in $\R^3$ in $\PolProb$ to be proportional to $8 \, \ell \sqrt{4-\ell^2} \, E\left(-(\ell^2-4)^2/(16 \,\ell^2)\right)$ where $E$ is the complete elliptic integral of the second kind \cite[\href{https://dlmf.nist.gov/19.2.E8}{(19.2.8)}]{NIST:DLMF}. A histogram of 1 million weighted samples computed using the sampling weights in~\eqref{eq:final sampling weights} is plotted along with this function at left. The probability distribution of the same length in $\PolQuotProb$ is known to be $1/2$. At right, we see this function plotted along with a histogram of 1 million weighted samples computed using the sampling weights in~\eqref{eq:final quotient sampling weights}. We see that in both cases agreement between theory and computation is very good, and note as well that the results are quite different.}
\label{fig:4gon chords}
\end{figure}

We last present a performance comparison for our algorithm versus the Action-Angle Method (AAM)~\cite{CDSU} and the Progressive Action-Angle Method (PAAM)~\cite{CSS} for equilateral $n$-gons in~$\R^3$ with respect to $\PolQuotProb$. We are comparing reweighted sampling with direct sampling, so it would be unfair to measure performance by the number of sampled polygons per second. Instead, we use each sampler to estimate the expected (squared) radius of gyration
\begin{align*}
	R^2(v) = \frac{1}{n^2} \sum_{i,j = 1}^n \, \nabs{v_i - v_j}^2 = \frac{1}{n} \, \sum_{i}^n \nabs{v_i - \bar v}^2,
\end{align*}
where $v_1,\dotsc,v_n$ are the vertex positions of the polygon and $\bar v$ is their mean.
We use standard techniques to estimate confidence intervals and stop when the radius of the $99\%$ confidence interval is less than $0.1\%$ of the sample mean. In \autoref{fig:perf} we report the number of samples required as well as timings.  For this integrand, CoBarS requires (asympotically) about $25\%$ more samples than AAM/PAAM. However, CoBarS has time complexity $O(n)$, while
AAM requires $O(n^{2.5})$ time and PAAM requires $O(n^{2})$ time. Therefore, the small number of additional samples is quickly amortized as $n$ increases, with crossover around $n = 50$.

\begin{figure}[t]
\begin{center}
\capstart
\includegraphics{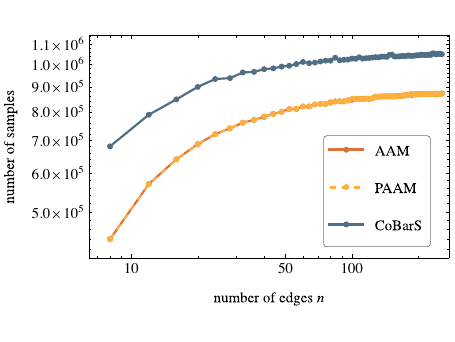}
\hfill
\includegraphics{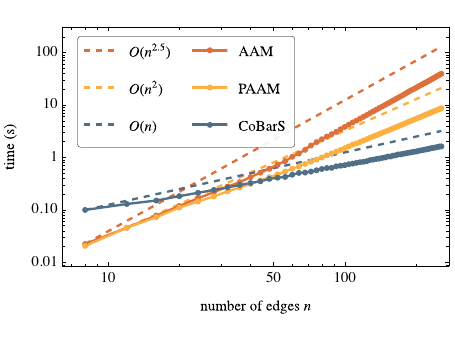}
\end{center}
\caption{Number of samples (left) and time (right) required to estimate the mean radius of gyration of an equilateral $n$-gon in $\R^3$ in the $\PolQuotProb$ measure with $99\%$ confidence to within $0.1\%$ of the sample mean. The experiments were conducted on an Apple~M1~Max with 8 parallel CPU threads.}
\label{fig:perf}
\end{figure}


\section{Conclusion}

We have given a new algorithm for sampling configurations of closed polygons with arbitrary prescribed edgelengths in any dimension. Our method can sample the same symplectic volume on $\PolQuot$ for $d=3$ as the Progressive Action-Angle method~\cite{CSS}. However, it is much more general, allowing us to construct samples with arbitrary edgelengths, in other dimensions, and with respect to the measure $\PolProb$ as well as the measure $\PolQuotProb$ on the quotient. We provide an open-source implementation of our algorithm:
see \cite{Cantarella_CoBarS_-_Conformal} for a parallel, header-only implementation in \emph{C++}; and see \cite{Cantarella_CoBarSLink_-_A} for a \emph{Mathematica} interface.  Our algorithm runs in $O(n)$ time as opposed to the $O(n^2)$ complexity of the Progressive Action Angle method. Though time-per-sample can be misleading for reweighted samplers, our performance comparisons show that the new sampler is faster at estimating means to fixed confidence intervals. 

We now suggest some avenues for further investigation. Many authors have studied the homology of the space of planar polygons with fixed edgelengths~\cite{Farber2007,Kamiyama2015}. It would be interesting to combine our sampling algorithm with tools from topological data analysis to find cohomology groups for polygon spaces computationally. This could give new insight into the remaining open questions regarding the cohomology rings of polygon spaces in higher dimensions as well, as in~\cite{FHS2011}. 

The volumes of polygon spaces in any dimension can (in principle) be computed explicitly by integrating the reweighting factors of~\eqref{eq:final sampling weights} and~\eqref{eq:final quotient sampling weights} over $\Arm$. 
Very little appears to be known about these volumes. In~2d, Kamiyama~\cite{Kamiyama2012} has given computational results on the volume of the space of polygons with one long edge for $n=4,5,6$. In~3d, Khoi~\cite{Khoi2005} used Witten's formula to compute symplectic volumes for all polygon spaces. Khoi proved that among all $n$-gons with the same total edgelength in $\R^3$, the equilateral $n$-gons are the most flexible in the sense that their configuration space has the largest symplectic volume. These methods are deeply symplectic and so tied to the three dimensional case. But it is very natural to ask: are the equilateral $n$-gons the most flexible in $\R^d$? Our algorithm provides a powerful toolkit for this kind of investigation.

\bibliographystyle{abbrvhref}

\bibliography{papers,dlmf}

\end{document}